\theoremstyle{plain}
\newtheorem{theorem}{Theorem}[section]
\newtheorem{lemma}[theorem]{Lemma}
\newtheorem{corollary}[theorem]{Corollary}
\newtheorem{proposition}[theorem]{Proposition}
\theoremstyle{definition}
\newtheorem{definition}[theorem]{Definition}
\newtheorem{example}[theorem]{Example}
\theoremstyle{remark}
\newcommand{\textetaleaxiom}[1]{\text{$(\acute{E}t)_{#1}$} }
\newcommand{\etalevariety}[1]{\category{\acute{E}t}(#1) }
\newcommand{\mathimplies}{\Rightarrow}
\newcommand{\category}[1]{\mathbf{#1}}
\newcommand{\functor}[1]{#1}
\newcommand{\alg}[1]{\category{Alg}(#1)}
\newcommand{\powerset}[1]{\mathcal{P}(#1)}
\newcommand\restr[2]{#1\vert_{#2}}
\newcommand{\leftadjunct}[1]{#1_{*}}
\newcommand{\rightadjunct}[1]{#1^{*}}
\newcommand\upsetarrow {{\uparrow}}
\newcommand\downsetarrow {{\downarrow}}
\newcommand{\esakiadualhomo}[1]{\widehat{#1}}
\begin{document}

\title[\'{E}tale algebras over finite Heyting algebras]{\'{E}tale algebras over finite Heyting algebras}

\author{
    \fnm{Evgeny} 
    \sur{Kuznetsov}
    }
\email{e.kuznetsov@freeuni.edu.ge}

\affil{
\orgdiv{Department of Mathematical Logic}, 
\orgname{Andrea Razmadze Mathematical Institute of I. Javakhishvili Tbilisi State University}, 
\orgaddress{
            \street{2 Merab Aleksidze II Lane}, 
            \city{Tbilisi}, 
            \postcode{0193},  
            \country{Georgia}
            }
    }

\abstract{
In this paper, we investigate the concept of local homeomorphism in Esakia spaces. We introduce the notion of \'{e}tale Heyting $H$-algebra and establish category-theoretic duality for \'{e}tale Heyting $H$-algebra in the case of finite Heyting algebra $H$. Furthermore, we give an identity that axiomatizes the variety of \'{e}tale Heyting $H$-algebras when $H$ is finite. We also show that the category of Stone space-valued (co)presheaves over a finite Esakia space $X$ is equivalent to the slice category of local homeomorphisms over $X$. The fact is used to show that, in comparison with the case of general Heyting $H$-algebras, it is easier to compute finite colimits in the category of \'{e}tale Heyting $H$-algebras.
}

\keywords{
Heyting algebra,\sep Esakia duality,\sep \'{E}tale algebra,\sep Expansion by constants
}

\maketitle

\section{Introduction}\label{intro}

In the 20\textsuperscript{th} century, deep connections between algebra and geometry were extended to the realm of logic through algebraic and categorical logic. This perspective on logic sheds new light on familiar results while also yielding novel insights.

One of the convenient notions to employ when working with profound symmetry between algebra and geometry is the concept of equivalence of categories and dual equivalence, also known as dualities.

Despite its highly abstract nature, this symmetry proves remarkably fruitful, forging connections between seemingly disparate branches of mathematics. Notably, these category-theoretic dualities act as bridges between algebra and geometry, illuminating distinct conceptual facets of a subject. Among other notable dualities are, for example, the Gelfand-Naimark duality between $C^{*}$-algebras and the category of compact Hausdorff spaces \cite{GelNeu43} \cite[Ch. 4]{stone-spaces-johnstone}; Grothendieck duality for commutative rings and affine schemes \cite{HarsthorneResidues}, \cite[Ch. 5]{stone-spaces-johnstone}; extremely beautiful duality called Isbell duality \cite{adequate-isbell},\cite{avery-leinster-on-isbell}; Stone duality between Boolean algebras and compact Hausdorff zero-dimensional topological spaces \cite{davey_priestley_2002}; as well as Priestley \cite{priestley-1972} and Esakia \cite{esakia1974} dualities for distributive lattices and Heyting algebras, respectively.

This paper aims to study mappings between Esakia spaces that are locally invertible around every domain element, employing Esakia duality. Later, we will refer to such functions as local homeomorphisms. The concept will be further elaborated below, as defined in Definitions \ref{localHomeomorphismDefinition} and \ref{pMorphismDefinition}. Before delving into that, it's worth noting that Esakia spaces are objects closely linked through Esakia duality to Heyting algebras, which are algebraic models of intuitionistic propositional logic.

Specifically, we establish criteria for a map between Esakia spaces to be locally invertible around every point of its domain, and we investigate the algebraic duals associated with such objects. We will also demonstrate that the objects under consideration play a valuable role in studying Heyting algebras.

As a reference to basic notions from category theory, we use nicely written textbooks \cite{riehl-category-book}, \cite{leinster_book}, \cite{maclane-category-book}. We will use the notions from category theory explicitly, often even implicitly, to formalize other concepts we employ.

We mainly follow the book \cite{burrisAndSankappanavar} for concepts within the field of algebraic logic, including \emph{distributive lattice}, \emph{Heyting algebra}, \emph{Boolean algebra}, \emph{prime filter}, \emph{ultrafilter}, \emph{principal filter}, \emph{variety of universal algebras}, and more.

For notions from general topology, we mainly refer to, e.g., \cite{engelking} and \cite{willard}.

The article is intentionally written neutrally, often devoid of shortcut category-theoretic terminology, to be accessible to the general public.

\subsection{Stone duality and Esakia duality}
Let us begin by revisiting the topological and ordered-topological dualities for structures found in algebraic logic, such as bounded distributive lattices, Boolean algebras, and Heyting algebras.

The works of Marshall H. Stone on topological duality for distributive lattices and later for Boolean algebras have laid the groundwork for understanding the dual relationship between Boolean algebras and compact Hausdorff spaces \cites{stone_1_36,stone_2_38}, \cite{stone-spaces-johnstone}. Stone's representation theorem reveals the intricate relationship between the algebraic structure of Boolean algebras and the topology of specific spaces, offering a powerful tool for analyzing Boolean algebras through their dual spaces.

 We assume that the reader is familiar with the notions of \emph{compact} topological space; see, e.g., \cite[Definition 17.1]{willard}, \emph{Hausdorff} space; see, e.g., \cite[Definition 13.5]{willard}, and \emph{zero-dimensional} topological space; see, e.g., \cite[Ch. 6, \S2]{engelking}.

\begin{definition}
    A topological space $X$ is called \emph{Stone space} if it is compact, Hausdorff, and zero-dimensional.
\end{definition}

We use $\category{S}$ to denote the category of Stone spaces and continuous maps.

To comprehend the relationship between Boolean algebras and Stone spaces, one uses the notion of dual equivalence of categories; see, e.g., \cite[\textsection VI.4]{maclane-category-book}. Moreover, to define dual equivalence of categories, we will also need a notion of opposite category; see \cite[\textsection II.2]{maclane-category-book}. Using these two notions, we can briefly define the concept of dual equivalence (duality) between two categories.

\begin{definition}
    A \emph{dual equivalence} (\emph{dual equivalence}) between categories $\category{A}$ and $\category{B}$ is an equivalence between the opposite category $\category{A}^{op}$ and the category $\category{B}$.
\end{definition}

We now possess sufficient notions to explain Stone duality. In particular, M. H. Stone's work \cites{stone_1_36, stone_2_38} demonstrates that the category of Stone spaces and continuous functions is dually equivalent to the category of Boolean algebras and Boolean algebra homomorphisms.

At the level of objects, the Stone duality is as follows; see, e.g., \cite{davey_priestley_2002}: 
\begin{itemize}
\item For the Stone space $X$, the corresponding Bolean algebra $B_X$ is the algebra of clopen subsets of $X$.
\item On the other hand, for a Boolean algebra $B$, the underlying set of the corresponding Stone space $X_B$ consists of prime filters of $B$ (which, in the case of Boolean algebras, are the same as ultrafilters; see \cite[Theorem 3.12]{burrisAndSankappanavar}. The basis for topology on $X_B$ consists of subsets $C_a=\{U\mid a\in U\}$ of prime filters $U$ of $B$ for every element $a\in B$.
\item Stone duality sends continuous functions between Stone spaces to Boolean algebra homomorphisms between corresponding Boolean algebras, and vice versa.
\end{itemize}

In the beginning of the 1970s, H. A. Priestley provided order topological duality for distributive lattices \cites{priestley-1970, priestley-1972}. Relatively at the same time, L. Esakia explored order topological duality for Heyting algebras, providing a profound understanding of the interplay between topological structures and algebraic structures related to intuitionistic propositional logic \cites{esakiaBook, n_bezhanishvli_dissertation}.

To explore further, we will employ the concept of a partially ordered set (\emph{poset}); see e.g., \cite[Definition 1.2.]{burrisAndSankappanavar}.

A certain family of subsets conveniently emphasizes and shapes the geometric properties of partially ordered sets. The following definition describes the concept of upward-closed and downward-closed subsets within a partially ordered set.

\begin{definition}
    A subset $A$ of a partially ordered set $X$ is called \emph{upward-closed} (or \emph{upset}) if for arbitrary $x\in A$ and  arbitrary $x'\in X$, $x\leqslant x'$ implies $x'\in A$. A subset $A$ is called \emph{downward-closed} (or \emph{downset}) if for arbitrary $x\in X$,  $x\in A$ and $x'\leqslant x$ imply $x'\in A$. 
\end{definition}

Note that within upsets and downsets, some are generated by a single element. 

\begin{definition}\label{upset-definition} 
   For an element $x$ of a partially ordered set $X$, a \emph{principal upset} generated by $x$ is a subset ${\upsetarrow x=\{x'\mid x\leqslant x' \}}$; similarly, the subset ${\downsetarrow x =\{x'\mid x'\leqslant x \}}$ is called \emph{principal downset}.
\end{definition}
Principal upsets and downsets play a special role in this theory because $\upsetarrow x$ and $\downarrow x$ are the smallest upset and downset containing $x$, respectively.

Note also that the collection of all upsets $\category{Up}(X)$ (downsets $\category{Down}(X)$, respectively) on a partially ordered set $X$ is closed under arbitrary intersections and unions and contains $\emptyset$ as well as $X$. This observation tells us that the collection of upsets (alternatively downsets) forms an Alexandroff topology on $X$; see, e.g., \cite{Alexandroff-37}, \cite[Ch. II, \S 1.8]{stone-spaces-johnstone}. Later, we refer to this topology as the \emph{upset topology} (\emph{downset topology}, respectively) on $X$.

Using the terms that have been previously defined, we can now recall two classes of partially ordered topological spaces called \emph{Priestley spaces} and \emph{Esakia spaces}.

\begin{definition}\label{esakia_space_definition}
A \emph{Priestley space} is a partially ordered topological space that is compact, satisfying the condition that for all $x\not\leqslant y$, there exists a clopen upset $U$ with $x \in U$ and $y\not \in U$. The last condition is called \emph{Priestley separation axiom}. A Priestley space is called \emph{Esakia space} if, moreover, for each clopen set $C$, the smallest downset $\downsetarrow C$ containing $C$ is also clopen.
\end{definition}
Note that there are also other equivalent definitions of Esakia spaces, some of which avoid explicit use of the Priestley separation axiom; see, for example, \cite{esakiaBook}.

The following two lemmas, in particular, state some useful properties of Priestley spaces and Esakia spaces. For the proof, please consult \cite{priestley-1972}.

\begin{lemma}\label{esakia_space_pointed_upset_is_closed}
    In a Priestley space $X$ the subsets $\upsetarrow x$ and $\downsetarrow x$ are closed subsets for each $x\in X$. 
\end{lemma}

Moreover, along with compactness, the Priestley separation axiom implies much stronger topological properties for a Priestley space; see, e.g., \cite{davey_priestley_2002} for proof.

\begin{lemma}
Every Priestley space is Hausdorff and zero-dimensional. By definition, it is compact; hence, every Priestley space is a partially ordered Stone space.
\end{lemma}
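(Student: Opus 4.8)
The plan is to derive Hausdorffness and zero-dimensionality directly from the Priestley separation axiom, keeping in mind that compactness is already built into the definition of a Priestley space; the argument is essentially Priestley's original one.

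First I would prove that $X$ is Hausdorff. Given distinct points $x,y$, antisymmetry of $\leqslant$ rules out $x\leqslant y$ and $y\leqslant x$ holding simultaneously, so after possibly interchanging $x$ and $y$ we may assume $x\not\leqslant y$. The Priestley separation axiom then yields a clopen upset $U$ with $x\in U$ and $y\notin U$, and $U$, $X\setminus U$ are disjoint open neighbourhoods of $x$ and $y$ respectively.

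For zero-dimensionality I would show that every open set $W$ and every point $x\in W$ admit a clopen set $C$ with $x\in C\subseteq W$. Put $F=X\setminus W$; this is closed, hence compact, and $x\notin F$. For each $y\in F$ we have $x\neq y$, so $x\not\leqslant y$ or $y\not\leqslant x$. In the first case the separation axiom gives a clopen upset $U\ni x$ with $y\notin U$, and I set $C_y:=X\setminus U$; in the second case it gives a clopen upset $C_y\ni y$ with $x\notin C_y$. Either way $C_y$ is a clopen neighbourhood of $y$ that misses $x$. The family $\{C_y : y\in F\}$ covers the compact set $F$, so finitely many $C_{y_1},\dots,C_{y_n}$ suffice, and then $C:=X\setminus\bigcup_{i=1}^{n} C_{y_i}$ is clopen, contains $x$, and is disjoint from $F$, i.e. $C\subseteq W$. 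Thus the clopen sets form a basis.

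Finally, combining this with the definitional compactness of a Priestley space shows that $X$ is compact, Hausdorff and zero-dimensional, and hence, carrying its partial order, a partially ordered Stone space. The only point requiring care is the asymmetry of the Priestley separation axiom: it separates $x$ from $y$ only under the hypothesis $x\not\leqslant y$, so in the zero-dimensionality step one must branch on which of $x\not\leqslant y$ or $y\not\leqslant x$ holds and, in the former branch, pass to the complement of the clopen upset in order to obtain a clopen neighbourhood of $y$ rather than of $x$. Beyond this bookkeeping I do not anticipate any genuine obstacle.
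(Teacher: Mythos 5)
Your proof is correct; the paper itself omits the argument and simply cites Davey--Priestley, and what you have written is precisely the standard proof from that reference (Hausdorffness via antisymmetry plus the separation axiom, zero-dimensionality via a compactness argument producing a clopen neighbourhood basis). No issues.
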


Our primary focus is on studying Priestley spaces and Esakia spaces within the framework of category theory. However, to define the categories of Priestley spaces and Esakia spaces, it is necessary to also specify the morphisms between these objects.

To avoid ambiguity, we state the following straightforward definition:

\begin{definition}\label{monotone-function-definition}
A map $f\colon X'\to X$ between partially ordered sets $(X', \leqslant')$ and $(X, \leqslant)$ is referred to as \emph{order-preserving} if $x_1 \leqslant' x_2$ implies $f(x_1) \leqslant f(x_2)$.   
\end{definition}

Morphisms in the category of Priestley spaces are the continuous order-preserving maps between Priestley spaces.

\begin{definition}\label{pMorphismDefinition}
A map between partially ordered sets $X$ and $Y$ is called a \emph{$p$-morphism} if it is order-preserving, and for any $y \geq f(x)$ with $x\in X$ and $y\in Y$, there is an $x' \geq x$ with $f(x')=y$. We refer to the last condition of a $p$-morphism as the `\emph{back}' condition. A $p$-morphism is called \emph{strict} if the $x'$ in the `back' condition is moreover unique.    
\end{definition}

A standard Esakia morphism $f\colon X'\to X$ in the category $\category{E}$ of Esakia spaces is a function that is continuous with respect to the underlying Stone topologies, which additionally is a $p$-morphism with respect to the corresponding partial order relations on $X'$ and $X$. Note that we have used the word `standard' above. This is because, in what follows, we are going to restrict our attention to those Esakia morphisms that are strict $p$-morphisms.

To the best of the author's knowledge, the notion of $p$-morphism and the simplest form of the representation of Heyting algebras by partially ordered sets were introduced in \cite{troelstra1966}. However, the complete form of order-topological duality for Heyting algebras was first introduced by Leo Esakia in \cite{esakia1974}; see also \cite{esakiaBook}. In essence, Esakia duality is an extension of the Stone duality for Boolean algebras and, in particular, of the Priestley duality for distributive lattices \cites{priestley-1970, priestley-1972}. Note also that, relatively at the same time, the analogous duality appears in \cite{adams1984}.

\begin{itemize}
 \item At the level of objects, Priestley duality assigns an ordered topological space, denoted as $X_L$, to a bounded distributive lattice $L$. The underlying set of points in $X_L$ comprises prime filters of $L$, equipped with a topology generated by subsets $C_a=\{U, X\setminus U\mid U \text{ is a prime filter of } L \text{ and } h\in U\}$ for every $a\in L$. The partial order on $X_L$ is defined as $U\leqslant V$ if and only if $U\subseteq V$. If $L$ is also isomorphic to a Heyting algebra $H$, the Priestley space $X_H$ turns out to be an Esakia space.

\item On the other hand, to a Priestley space $X$ corresponds the bounded distributive lattice $L_X$ of \emph{clopen upsets} of $X$. If $X$ is also an Esakia space, then $L_X$ is a Heyting algebra.

\item Priestley duality maps continuous order-preserving functions between Priestley spaces to homomorphisms between corresponding bounded distributive lattices, and vice versa. Similarly, under Esakia duality, continuous $p$-morphisms between Esakia spaces are mapped to Heyting algebra homomorphisms between corresponding Heyting algebras, and vice versa.
\end{itemize}

The underlying set of an Esakia space $X$ is finite if and only if $H_X$, the Esakia dual Heyting algebra of $X$, is finite; see, e.g., \cite[Theorem 3.4.17]{esakiaBook}.

The Esakia duality theorem posits that the correspondence between the category $\category{E}$ of Esakia spaces and standard Esakia morphisms and the category $\category{H}$ of Heyting algebras and Heyting algebra homomorphisms, as described above, extends to a pair of functors that establish a dual equivalence of categories; see, e.g., \cite{esakia1974}, and also \cites{n_bezhanishvli_dissertation}. Priestley duality similarly establishes a dual equivalence between the category of Priestley spaces and continuous order-preserving functions and the category of bounded distributive lattices and lattice homomorphisms; see, e.g.,\cites{priestley-1970, priestley-1972} and \cite{davey_priestley_2002}.

\section{Local homeomorphisms}
This paper's major goal is to investigate the local invertibility of Esakia morphisms around each element of the domain (often referred to as \emph{local homeomorphisms}; see Definition \ref{localHomeomorphismDefinition} below). We then investigate possible algebraic characterizations linked to them via Esakia duality.

Note also that local homeomorhisms of topological spaces can be described in terms of sheaves \cite{maclane-moerdijk-sheaves}, a machinery widely used throughout mathematics for almost a century. Sheaves originated in mathematical analysis and geometry, where they serve as cohomology coefficients and representations of functions suitable for different types of manifolds. On the other hand, logic uses sheaves as vehicles for models of set theory.

We hope that the study of local homeomorphisms in Esakia spaces contributes to a better understanding of Heyting algebras and aligns with broader trends in the area of research.

\subsection{Local homeomorphisms of topological spaces}

Let us begin our advancement with the well-studied example of topological spaces.

For notational convenience, for a map $f\colon X'\to X$, we consider two functions between corresponding power sets, namely ${\leftadjunct{f}\colon \powerset{X'}\to \powerset{X}}$ and ${\rightadjunct{f}\colon \powerset{X}\to \powerset{X'}}$ defined as follows: ${\leftadjunct{f}(U)=\{f(x)\mid x\in U\}} $ and ${\rightadjunct{f}(V)=\{x\mid f(x)\in V\}}$.

These two functions, $\leftadjunct{f}$ and $\rightadjunct{f}$\!, preserve the subset inclusion order on powersets. If they are seen as functors between poset categories (see the Definition \ref{poset-category} below), they make an obvious adjunction, meaning $U \subseteq \rightadjunct{f}(V)$ if and only if $\leftadjunct{f}(U) \subseteq V$.

We assume the reader is familiar with the notions of \emph{open map}, \emph{closed map}, and \emph{homeomorphism}. For details related to these notions, please refer to, e.g., \cite[\S 1.4]{engelking}.

 \begin{definition}\label{localHomeomorphismDefinition}
A map $f\colon X'\to X$ between topological spaces $X'$ and $X$ is said to be \emph{local homeomorphism} (or in other terms \emph{locally invertible} around every element of its domain $X'$), if for every element $x\in X'$ there exists an open subset $U_x\subseteq X'$ such that the image $\leftadjunct{f}(U_x)$ is an open subset in $X$ and the restriction of $f$ to $U_x$ is a homeomorphism between $U_x$ and $\leftadjunct{f}(U_x)$ equipped with the subspace topology induced from $X'$ and $X$ respectively.
 \end{definition}

We also encounter the following terminology in the subject literature: a continuous map $f\colon X' \to X$ is referred to as a \emph{space over} $X$ (or sometimes simply \emph{bundle}), while a local homeomorphism $f\colon X' \to X$ is termed an \emph{\'{e}tale space} over $X$ (or \emph{\'{e}tale map} over $X$). This terminology justifies both the title of the article and the name of the algebraic construction of \'{E}tale Heyting algebra that will be discussed below.

One example is the category $\category{LH}/X$, which consists of all the local homeomorphisms over the fixed topological space $X$. This is one way to describe the category of sheaves of continuous maps over $X$; see, for example, \cite[Ch. 2, \S 6]{maclane-moerdijk-sheaves}.

The sequence of well-known results that follows demonstrates the formation of a locally injective continuous map from locally invertible pieces. We can view this instance of local homeomorphisms in topological spaces as a simple example that illustrates more intricate and fruitful instances of a powerful and involved idea.

We omit the proof for the following useful lemma because it is almost trivial.
\begin{lemma}\label{bijection-lemma}
If $f\colon X'\to X$ is a bijection then for every subset $S\subseteq X$ holds \[\leftadjunct{f}(X'\setminus S) = X\setminus \leftadjunct{f}(S).\]
\end{lemma}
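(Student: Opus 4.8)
The plan is to prove the set equality by establishing the two inclusions separately, paying attention to which half of the bijectivity hypothesis each one requires. Throughout I read $S$ as a subset of the domain $X'$ (so that $X'\setminus S$ typechecks), and I use only the set-theoretic content of the defining formula $\leftadjunct{f}(U)=\{f(x)\mid x\in U\}$; no topology enters.

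First, the inclusion $\leftadjunct{f}(X'\setminus S)\subseteq X\setminus\leftadjunct{f}(S)$. Take $y\in\leftadjunct{f}(X'\setminus S)$, so $y=f(x)$ for some $x\in X'$ with $x\notin S$; in particular $y\in X$. Suppose toward a contradiction that $y\in\leftadjunct{f}(S)$, i.e.\ $y=f(s)$ for some $s\in S$. Then $f(x)=f(s)$, and injectivity of $f$ forces $x=s\in S$, contradicting $x\notin S$. Hence $y\in X\setminus\leftadjunct{f}(S)$. This direction uses only that $f$ is injective.

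Conversely, for $\supseteq$, take $y\in X\setminus\leftadjunct{f}(S)$. By surjectivity of $f$ there is $x\in X'$ with $f(x)=y$. If $x$ were in $S$, then $y=f(x)\in\leftadjunct{f}(S)$, a contradiction; hence $x\in X'\setminus S$, and therefore $y=f(x)\in\leftadjunct{f}(X'\setminus S)$. This direction uses only surjectivity. Combining the two inclusions yields the claimed equality.

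There is no genuine obstacle here — the lemma is purely set-theoretic and, as remarked in the text, essentially trivial. The only point worth recording is that both halves of the hypothesis ``$f$ is a bijection'' are genuinely used: for a map that is merely injective the inclusion $\supseteq$ can fail (points outside the image of $f$ lie in $X\setminus\leftadjunct{f}(S)$ but not in $\leftadjunct{f}(X'\setminus S)$), and for a map that is merely surjective the inclusion $\subseteq$ can fail (a point $x\notin S$ may still satisfy $f(x)=f(s)$ for some $s\in S$).
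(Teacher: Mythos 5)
Your proof is correct and is exactly the standard element-chasing argument the paper has in mind when it omits the proof as ``almost trivial''; splitting the equality into an injectivity half and a surjectivity half is the natural route, and your observation that each half of bijectivity is genuinely needed is a nice (if optional) addition. You were also right to silently repair the statement's typo by reading $S$ as a subset of the domain $X'$ rather than of $X$, since otherwise $X'\setminus S$ and $\leftadjunct{f}(S)$ do not typecheck.
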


Using this last observation, the following proposition can be demonstrated with ease; for proof, see, e.g., \cite[Proposition 1.4.18]{engelking}.

\begin{corollary}\label{continuous-inverse-lemma}
    For a bijection $f\colon X'\to X$ the following are equivalent
    \begin{itemize}
        \item $f^{-1}\colon X\to X'$ is continuous;
        \item $f$ is open;
        \item $f$ is closed;
    \end{itemize}
\end{corollary}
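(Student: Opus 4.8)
The plan is to deduce this from Lemma~\ref{bijection-lemma} together with the elementary fact that a continuous map has a continuous inverse exactly when that inverse, viewed as a function, is itself continuous --- but since here the statement is phrased purely in terms of the set-map $f$, I would instead argue the three bullet points form a cycle of implications, using the dictionary between openness, closedness, and continuity of $f^{-1}$.

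First I would observe that, for a bijection $f\colon X'\to X$, the inverse function $g=f^{-1}\colon X\to X'$ exists as a set-map, and for any $U\subseteq X'$ one has $\rightadjunct{g}(U)=\leftadjunct{f}(U)$; this is just unwinding the definitions of direct and inverse image. Consequently, $g$ is continuous if and only if $\rightadjunct{g}(U)$ is open in $X$ for every open $U\subseteq X'$, which by the identity $\rightadjunct{g}(U)=\leftadjunct{f}(U)$ says precisely that $\leftadjunct{f}$ carries opens to opens, i.e. that $f$ is an open map. This gives the equivalence of the first two bullet points with essentially no work.

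Next I would connect openness and closedness of $f$ via complementation, and this is where Lemma~\ref{bijection-lemma} does the job: since $f$ is a bijection, $\leftadjunct{f}(X'\setminus S)=X\setminus\leftadjunct{f}(S)$ for every $S\subseteq X'$. If $f$ is open and $C\subseteq X'$ is closed, then $X'\setminus C$ is open, so $\leftadjunct{f}(X'\setminus C)=X\setminus\leftadjunct{f}(C)$ is open in $X$, whence $\leftadjunct{f}(C)$ is closed; thus $f$ is closed. The reverse implication is symmetric, replacing opens by closeds throughout. This closes the cycle: open $\Rightarrow$ closed $\Rightarrow$ open, and open $\Leftrightarrow$ ($f^{-1}$ continuous), so all three conditions are equivalent.

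There is no real obstacle here --- the statement is genuinely a corollary, as its label indicates. The only point requiring a modicum of care is the bookkeeping that direct image under $f$ coincides with inverse image under $f^{-1}$ when $f$ is bijective, so that ``$f$ open'' translates faithfully into ``$f^{-1}$ continuous''; once that identification is made explicit, everything else is a one-line application of Lemma~\ref{bijection-lemma} and the definition of continuity in terms of preimages of opens. I would present the argument as the three implications (i)$\Rightarrow$(ii)$\Rightarrow$(iii)$\Rightarrow$(i), or equivalently note the two separate equivalences, whichever reads more cleanly.
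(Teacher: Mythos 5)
Your proof is correct and follows exactly the route the paper intends: the paper gives no proof of its own, merely citing Engelking after remarking that Lemma~\ref{bijection-lemma} makes the statement easy, and your argument is precisely the standard filling-in of that hint (direct image under $f$ equals preimage under $f^{-1}$, plus complementation via Lemma~\ref{bijection-lemma} to link open and closed). Nothing to correct.
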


Nevertheless, in some contexts, the continuous function is a local homeomorphism with less assumption. We only recall the well-known case when the domain of the continuous function is compact and the codomain is Hausdorff. For proof of the following two statements, refer to, e.g., \cite[Theorem 3.1.12 and Theorem 3.1.13]{engelking}.

\begin{lemma}\label{closed-function-lemma}
    Every continuous map $f\colon X' \to X$ from
    compact space $X'$ to Hausdorff space $X$ is closed.
\end{lemma}

\begin{corollary}
  A continuous bijection from a compact space to a Hausdorff space is a homeomorphism.  
\end{corollary}

To facilitate future development, it is important to note that we can also regard functions as objects within a certain category, as outlined in the following definition; see, e.g., \cite[\S II.6]{maclane-category-book}.

\begin{definition}
    If $\category{C}$ is a category and $X$ is an object in $\category{C}$ then 
    \begin{itemize}
        \item    the \emph{category of objects under} $X$, denoted as $X/\category{C}$, is defined as a category where an objects are arrows $f\colon X\to A$ in $\category{C}$ with domain $X$. An arrow $h$ in $X/\category{C}$, from an object $f\colon X\to A$ to an object $g\colon X\to B$, is represented by an arrow $h\colon A\to B$ in $\category{C}$, forming a commutative the triangle $g=h\circ f$.
        \item Similarly the \emph{category of objects over} $X$, denoted as $\category{C}/X$, is  a category where objects are arrows $f\colon A\to X$ in $\category{C}$ with co-domain $X$. An arrow $h$ in $\category{C}/X$ is given by an arrow $h\colon A\to B$ in $\category{C}$ making the triangle $f=g\circ h$ commutative.
    \end{itemize}       
\end{definition}

\subsection{Ordered and ordered topological analogues}

The objective of the present study arose in search of a new class of morphisms of Esakia spaces called \emph{Esakia local homeomorphisms}, providing some evidence that for an Esakia space $X$, the corresponding category $\category{LH}_{\category{E}}/X$ of Esakia local homeomorphisms over $X$ enjoys many useful properties.

In this paper, the role of local homeomorphisms over Esakia spaces is played by continuous strict $p$-morphisms. We will explain and elaborate on the rationale for doing so. In the sequel, $\category{LH}_{\category{E}}/X$ denotes the category with objects, strict $p$-morphisms over a fixed Esakia space $X$ in the category of Esakia spaces. The arrows in $\category{LH}_{\category{E}}/X$ are standard Esakia morphisms between objects' domains, forming commutative triangles.

It is well known that if $f\colon Y\to X$ and $f'\colon Y'\to X$ are local homeomorphisms of topological spaces and $g\colon Y\to Y'$ is a continuous map such that $f'\circ g=f$, then $g$ is a local homeomorphism. Similarly, if $f\colon Y\to X$ and $f'\colon Y'\to X$ are strict $p$-morphisms, and $f'\circ g=f$ for the continuous order-preserving map $g\colon Y\to Y'$, then $g$ is also a strict $p$-morphism.

As we will see below, the categories $\category{LH}_{\category{E}}/X_{H}$ provide useful tools for studying general Heyting algebras, where $X_H$ is the dual Esakia space for a Heyting algebra $H$. In particular, if $H$ is a finite Heyting algebra, then finite limits in the category $\category{LH}_{\category{E}}/X_{H}$ are well-behaved and can be explicitly described.

On the Heyting algebra side, this provides a class of Heyting algebra homomorphisms with manageable amalgamation properties; see, e.g., the Corollary \ref{finite-colimits-cor}. This is important because pushouts of general Heyting algebra homomorphisms are notoriously difficult to describe.

The following well-known lemma characterizes order-preserving maps in terms of preservation upward-closedness by $\leftadjunct{f}$; see, e.g., \cite[Lemma 1.4.12]{esakiaBook} for proof.

\begin{lemma}
Let $X'$ and $X$ be partially ordered sets and $f\colon X'\to X$ be a map, then

\begin{enumerate}
\item $f$ is order-preserving if and only if $\leftadjunct{f}(\upsetarrow x)\subseteq \upsetarrow f(x)$.
\item $f$ satisfies the `back' condition of a $p$-morpism  if and only if $\upsetarrow f(x)\subseteq \leftadjunct{f}(\upsetarrow x)$.

\end{enumerate}
\end{lemma}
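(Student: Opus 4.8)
The statement to prove is a characterization of order-preserving maps and the `back' condition of a $p$-morphism in terms of the direct image map $\leftadjunct{f}$ acting on principal upsets. Let me sketch a proof.

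\medskip

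The plan is to prove both biconditionals by unwinding the definitions of $\leftadjunct{f}$, principal upsets, and the relevant conditions, treating each direction separately. Throughout I will use that, by Definition \ref{upset-definition}, $\upsetarrow x = \{x' \mid x \leqslant' x'\}$ in $X'$ and $\upsetarrow f(x) = \{y \mid f(x) \leqslant y\}$ in $X$, and that $\leftadjunct{f}(\upsetarrow x) = \{f(x') \mid x \leqslant' x'\}$.

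\medskip

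For part (1): assume first that $f$ is order-preserving in the sense of Definition \ref{monotone-function-definition}. To show $\leftadjunct{f}(\upsetarrow x) \subseteq \upsetarrow f(x)$, take an arbitrary element of $\leftadjunct{f}(\upsetarrow x)$; it has the form $f(x')$ for some $x' \geqslant' x$. Since $f$ is order-preserving, $x \leqslant' x'$ gives $f(x) \leqslant f(x')$, so $f(x') \in \upsetarrow f(x)$. Conversely, assume $\leftadjunct{f}(\upsetarrow x) \subseteq \upsetarrow f(x)$ holds for every $x \in X'$. Given $x_1 \leqslant' x_2$, we have $x_2 \in \upsetarrow x_1$, hence $f(x_2) \in \leftadjunct{f}(\upsetarrow x_1) \subseteq \upsetarrow f(x_1)$, which says exactly $f(x_1) \leqslant f(x_2)$; so $f$ is order-preserving.

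\medskip

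For part (2): assume $f$ satisfies the `back' condition of Definition \ref{pMorphismDefinition}. To show $\upsetarrow f(x) \subseteq \leftadjunct{f}(\upsetarrow x)$, take $y \in \upsetarrow f(x)$, i.e.\ $y \geqslant f(x)$. By the `back' condition there is $x' \geqslant x$ with $f(x') = y$; then $x' \in \upsetarrow x$ and $y = f(x') \in \leftadjunct{f}(\upsetarrow x)$. Conversely, assume $\upsetarrow f(x) \subseteq \leftadjunct{f}(\upsetarrow x)$ for all $x$. Given $x \in X'$ and $y \in Y$ with $y \geqslant f(x)$, we have $y \in \upsetarrow f(x) \subseteq \leftadjunct{f}(\upsetarrow x)$, so $y = f(x')$ for some $x' \in \upsetarrow x$, that is, $x' \geqslant x$ with $f(x') = y$; this is precisely the `back' condition. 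This proof is essentially a routine translation between the pointwise formulations and the image formulations, so I do not anticipate any genuine obstacle; the only point requiring mild care is keeping track of which partial order ($\leqslant'$ on $X'$ versus $\leqslant$ on $X$) is in play at each step and noting that the displayed inclusions are quantified over all $x \in X'$.
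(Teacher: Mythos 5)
Your proof is correct and is exactly the routine definition-unwinding that the paper has in mind: the paper itself omits the argument and simply cites \cite[Lemma 1.4.12]{esakiaBook}, and your two biconditionals match the standard proof given there. (The only cosmetic slip is writing $y\in Y$ in part (2) where the codomain is $X$, but this just mirrors the notation of Definition \ref{pMorphismDefinition} and causes no harm.)
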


We also use a well-known fact relating the properties of being an order-preserving map, a continuous map, a $p$-morphism, and an open map; see, e.g., \cite[Lemma 3.3.2]{esakiaBook}.

\begin{lemma}\label{contunuity-and-openness-is-monotonicity-and-pmorhismness}
    Let $X'$ and $X$ be partially ordered sets and $f\colon X'\to X$ be a map, then
    \begin{enumerate}
        \item $f$ is continuous with respect to upsets topologies on $X'$ and $X$ if and only if it is order-preserving;
        \item $f$ is open with respect to upsets topologies on $X'$ and $X$ if and only if the `back' condition of a $p$-morpism holds.
    \end{enumerate}
\end{lemma}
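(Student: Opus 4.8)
The plan is to prove each of the two biconditionals separately, translating between the topological language (continuity, openness with respect to the upset topologies) and the order-theoretic language ($f$ order-preserving, the `back' condition), using the previous lemma on $\leftadjunct{f}$ and $\upsetarrow$ as the bridge. The underlying principle is that in an Alexandroff topology the smallest open set containing a point $x$ is exactly the principal upset $\upsetarrow x$, so topological conditions can be tested pointwise on principal upsets.

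For part (1), first I would recall that a map between topological spaces is continuous iff the preimage of every open set is open, and that in the upset topology the open sets are exactly the upsets. So $f$ is continuous iff $\rightadjunct{f}(V)$ is an upset in $X'$ whenever $V$ is an upset in $X$. For the forward direction I would take $V$ an upset of $X$, $x\in\rightadjunct{f}(V)$ and $x'\geqslant x$; order-preservation gives $f(x')\geqslant f(x)\in V$, hence $f(x')\in V$, so $x'\in\rightadjunct{f}(V)$. For the converse, given $x'\geqslant x$ in $X'$, apply continuity to the upset $V=\upsetarrow f(x)$: since $x\in\rightadjunct{f}(\upsetarrow f(x))$ and this preimage is an upset, $x'$ lies in it too, i.e. $f(x')\geqslant f(x)$. (Alternatively one can invoke part (1) of the preceding lemma: $\leftadjunct{f}(\upsetarrow x)\subseteq\upsetarrow f(x)$ is equivalent to order-preservation, and continuity with respect to the Alexandroff topologies is exactly this inclusion for every $x$, because $\upsetarrow x$ is the minimal open neighbourhood of $x$.)

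For part (2), I would similarly use that $f$ is open iff $\leftadjunct{f}(U)$ is an upset in $X$ for every upset $U$ of $X'$, and that it suffices to check this on the basic opens $\upsetarrow x$ since every upset is a union of principal upsets and $\leftadjunct{f}$ preserves unions. So openness is equivalent to: $\leftadjunct{f}(\upsetarrow x)$ is an upset for all $x$, i.e. $\upsetarrow f(x)\subseteq\leftadjunct{f}(\upsetarrow x)$ for all $x$ (the reverse inclusion being automatic when $f$ is order-preserving, though one should be careful: openness here is meant to characterise the `back' condition on its own, and indeed $\leftadjunct{f}(\upsetarrow x)$ being upward closed says precisely that every $y\geqslant f(x)$ is of the form $f(x')$ with $x'\geqslant x$). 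Then part (2) of the preceding lemma identifies $\upsetarrow f(x)\subseteq\leftadjunct{f}(\upsetarrow x)$ with the `back' condition, completing the argument.

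The only mild subtlety — and the step I would be most careful with — is the reduction of the openness condition from \emph{all} open sets (all upsets) to the \emph{basic} open sets (principal upsets): this works because $\leftadjunct{f}$ commutes with arbitrary unions and an arbitrary union of upsets is again an upset, so if $\leftadjunct{f}(\upsetarrow x)$ is an upset for each $x$ then $\leftadjunct{f}(U)=\bigcup_{x\in U}\leftadjunct{f}(\upsetarrow x)$ is a union of upsets, hence an upset. Everything else is a routine unwinding of definitions together with the previous lemma, so I would keep the write-up short and cite that lemma for the order-theoretic half of each equivalence.
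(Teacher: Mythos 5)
Your proposal is correct. Note that the paper itself gives no proof of this lemma --- it is stated as a well-known fact with a pointer to Esakia's book (Lemma 3.3.2 there) --- so there is no in-paper argument to compare against; your write-up is the standard unwinding of definitions that the cited reference contains, built on exactly the bridge the paper sets up in the preceding lemma ($\leftadjunct{f}(\upsetarrow x)\subseteq\upsetarrow f(x)$ for order-preservation, $\upsetarrow f(x)\subseteq\leftadjunct{f}(\upsetarrow x)$ for the back condition).

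Both halves check out. Part (1) is clean in either of your two versions. In part (2), the one place worth a slightly more careful sentence is the passage from ``$\upsetarrow f(x)\subseteq\leftadjunct{f}(\upsetarrow x)$ for all $x$'' to ``$\leftadjunct{f}(\upsetarrow x)$ is an upset for all $x$'': at a single point these are not literally the same condition, and the implication needs the back condition applied at the intermediate point. Concretely, if $y\in\leftadjunct{f}(\upsetarrow x)$ with $y=f(x')$, $x'\geqslant x$, and $z\geqslant y$, you must invoke the hypothesis at $x'$ (not at $x$) to get $z\in\leftadjunct{f}(\upsetarrow x')\subseteq\leftadjunct{f}(\upsetarrow x)$. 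Your parenthetical shows you are aware of the issue, and quantified over all $x$ the two conditions are indeed equivalent, so nothing is broken --- just make that two-line chain explicit in the final write-up. The reduction of openness to basic opens via $\leftadjunct{f}$ commuting with unions is exactly right.
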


\begin{lemma}
Let $f\colon (X', \tau', \leqslant')\to (X, \tau,\leqslant)$ be a map between partially ordered topological spaces. Let for any $x\in X'$ there exist an open upset neighborhood $U_x$ of $x$ such that $\leftadjunct{f}(U_x)$ is an open upset and the restriction $\restr{f}{U_x}$ is a homeomorphism with respect to both ordinary and upset topologies. Then $f$ is an open, continuous, strict $p$-morphism.
\end{lemma}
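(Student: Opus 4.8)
The plan is to prove the three conclusions—continuity, openness, and being a strict $p$-morphism—by one and the same mechanism: glue the local data carried by the open cover $\{U_x\}_{x\in X'}$ into a global statement, and then, for the order-theoretic parts, feed statements about the \emph{upset} topologies into Lemma \ref{contunuity-and-openness-is-monotonicity-and-pmorhismness}. First, for the given topologies $\tau',\tau$: continuity is local on the domain, so since $\{U_x\}$ is an open cover of $X'$, each $\restr{f}{U_x}\colon U_x\to\leftadjunct{f}(U_x)$ is continuous for the subspace topologies, and $\leftadjunct{f}(U_x)$ is open in $X$, we get that $f^{-1}(V)=\bigcup_x\bigl(f^{-1}(V)\cap U_x\bigr)$ is open for every open $V\subseteq X$. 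For openness, given open $V\subseteq X'$ write $V=\bigcup_x(V\cap U_x)$; direct image commutes with unions, so $\leftadjunct{f}(V)=\bigcup_x\leftadjunct{f}(V\cap U_x)$, each $V\cap U_x$ is open in $U_x$, its image under the homeomorphism $\restr{f}{U_x}$ is open in $\leftadjunct{f}(U_x)$, and $\leftadjunct{f}(U_x)$ is open in $X$; hence $\leftadjunct{f}(V)$ is open. This disposes of continuity and openness for $\tau',\tau$.

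Next, for the order: rerun the same two arguments with the upset topologies in place of $\tau',\tau$. Here each $U_x$, being an upset, is open in the upset topology of $X'$, and its subspace upset topology coincides with its intrinsic upset topology as a poset; likewise $\leftadjunct{f}(U_x)$ is an upset of $X$. The hypothesis that $\restr{f}{U_x}$ is a homeomorphism for the upset topologies supplies exactly the local input required, so the same computations show $f$ is continuous and open for the upset topologies. By Lemma \ref{contunuity-and-openness-is-monotonicity-and-pmorhismness}(1), $f$ is order-preserving; by Lemma \ref{contunuity-and-openness-is-monotonicity-and-pmorhismness}(2), $f$ satisfies the `back' condition. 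Together with the previous paragraph, $f$ is an open, continuous $p$-morphism.

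It remains to check strictness. Fix $x\in X'$ and $y\geqslant f(x)$, and let $x'\geqslant x$ with $f(x')=y$ be a witness produced by the `back' condition just established. If $x''\geqslant x$ also satisfies $f(x'')=y$, then, since $U_x$ is an upset containing $x$, both $x'$ and $x''$ lie in $U_x$, and injectivity of $\restr{f}{U_x}$ forces $x''=x'$. Hence the witness is unique and $f$ is a strict $p$-morphism.

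I expect no genuine obstacle; the only point demanding care is the bookkeeping in the order step—specifically, that the image under $\restr{f}{U_x}$ of an open set of the subspace upset topology on $\leftadjunct{f}(U_x)$ is an upset of $X$, not merely of $\leftadjunct{f}(U_x)$. This is where the hypothesis that $\leftadjunct{f}(U_x)$ is an \emph{open upset} (and not just open) is used: an open set of the subspace upset topology has the form $\leftadjunct{f}(U_x)\cap W$ with $W$ an upset of $X$, and since $\leftadjunct{f}(U_x)$ is itself an upset of $X$, this intersection is an upset of $X$; consequently the union $\bigcup_x\leftadjunct{f}(V\cap U_x)$ is a union of upsets, hence an upset, which is what lets us conclude openness for the upset topology. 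The identification of subspace upset topologies with intrinsic ones on the $U_x$ and on the $\leftadjunct{f}(U_x)$ should be stated explicitly as a small preliminary observation.
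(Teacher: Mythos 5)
Your proposal is correct and follows essentially the same route as the paper: continuity and openness are glued from the local homeomorphisms over the open cover $\{U_x\}$, the order-theoretic conclusions are obtained by rerunning those arguments for the upset topologies and invoking Lemma \ref{contunuity-and-openness-is-monotonicity-and-pmorhismness}, and strictness follows from injectivity of $\restr{f}{U_x}$ on the upset $U_x\supseteq\upsetarrow x$. Your closing remark about identifying subspace upset topologies with intrinsic ones is a point the paper leaves implicit, but it does not change the argument.
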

\begin{proof}
First, let us show that $f$ is continuous. Let $V$ be an open subset of $X$ and $x\in f^{*}(V)$. By assumption, there exists an open neighborhood $U_x$ of $x$ such that $\restr{f}{U_x}$ is a homeomorphism between $U_x$ and $\leftadjunct{f}(U_x)$.

Subsets $V$ and $\leftadjunct{f}(U_x)$ are open subsets in $X$, so $V\cap \leftadjunct{f}(U_x)$ is an open subset in $X$. $\restr{f}{U_x}$ is a homeomorphism, and as a result is continuous, then $\rightadjunct{\restr{f}{U_x}}(V\cap \leftadjunct{f}(U_x))=\{x\in U_x\mid f(x)\in V\cap \leftadjunct{f}(U_x)\}=\left(\rightadjunct{f}(V)\cap U_x\right) $ is open in $X$ and $x\in \left(\rightadjunct{f}(V)\cap U_x\right) \subseteq \rightadjunct{f}(V)$. Hence, every $x\in \rightadjunct{f}(V)$ has an open neighborhood contained in $\rightadjunct{f}(V)$, and $\rightadjunct{f}(V)$ is open. 

Similarly, we can show that $f$ is order-preserving. Namely, using Lemma \ref{contunuity-and-openness-is-monotonicity-and-pmorhismness}, it is easy to see that a map is order-preserving if and only if for any upset $V$ in $X$ the subset $\rightadjunct{f}(V)$ is an upset in $X'$.

Let us now prove that $f$ is open. Let $A$ denote an open subset of $X'$. For each $x\in A$, choose $U_x\subseteq X$ and $V_x\subseteq X'$ so that $\restr{f}{U_x}$ is a homeomorphism between $U_x$ and $V_x$. For each $x\in A$, the subset $\leftadjunct{f}(U_x\cap A)$ is an open subset in $V_x$, so it is open in $X$ as well. Therefore, $\bigcup_{x\in A}\leftadjunct{f}(U_x\cap A)=\leftadjunct{f}(A)$ is an open subset of $X$. 

In the same way, we can demonstrate that $f$ is a $p$-morphism since an order-preserving map is a $p$-morphism if and only if the subset $\leftadjunct{f}(A)$ is an upset in $X$ for each upset $A$ in $X'$. Lemma \ref{contunuity-and-openness-is-monotonicity-and-pmorhismness} is used again to see this.

Now let us show that $f$ is strict. Let $x\in X'$ and $f(x)\leqslant x'$. Since $f$ is a $p$-morphism, there exists $y\in \upsetarrow x$ such that $f(y)=x'$. Because there exist open upset neighborhoods $U_x$ and $V_{f(x)}$ such that $\restr{f}{U_x}$ is a homeomorphism between them (with respect to upset topologies) and $\upsetarrow x\subseteq U_x$, it follows that such $y\in \upsetarrow x$ is unique.
\end{proof}

Recall that according to Corollary \ref{continuous-inverse-lemma}, for a continuous mapping $f\colon X'$ to be a local homeomorphism, for every point $x\in X'$ there must exist an open neighborhood $U_x$ of $x$ such that $\leftadjunct{f}(U_x)$ is an open subset and the restriction map $\restr{f}{U_{x}}$ is a closed/open bijection.
Also, by Lemma \ref{esakia_space_pointed_upset_is_closed}, in an Esakia space (and in a Priestley space in general, see \cite{priestley-1972}), the upset $\upsetarrow x$ is always a closed subset.

With this fact in mind, if $f$ is a strict $p$-morphism between Esakia spaces, we can relax the assumptions on $f$ by dropping the openness and closeness restrictions without losing the property of being a local homeomorphism of underlying topological spaces.

\begin{lemma}
Suppose $f\colon X'\to X$ is a continuous strict $p$-morphism between Esakia spaces $X'$ and $X$. Then $\restr{f}{\upsetarrow x}$ is a continuous bijective $p$-morphism between $\upsetarrow x$ and $\leftadjunct{f}(\upsetarrow x)$, with inverse $\restr{f}{\upsetarrow x}^{-1}$ a continuous $p$-morphism.
\end{lemma}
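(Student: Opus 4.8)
The plan is to verify each asserted property of $\restr{f}{\upsetarrow x}$ in turn, leaning on the lemmas already established. First I would fix $x \in X'$ and set $U = \upsetarrow x$, $V = \leftadjunct{f}(\upsetarrow x)$, both viewed as subspaces with the induced Stone topologies and induced orders. Continuity of $\restr{f}{U}$ is immediate since restrictions of continuous maps to subspaces are continuous, and the corestriction to $V = \leftadjunct{f}(U)$ is likewise continuous. Surjectivity onto $V$ is by definition of $V$. Injectivity is exactly the strictness hypothesis: if $f(y_1) = f(y_2)$ with $y_1, y_2 \in \upsetarrow x$, then both lie above $x$ and both map to the same point $f(x) \leqslant f(y_1)$, so by the uniqueness clause in the `back' condition of a strict $p$-morphism, $y_1 = y_2$. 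Thus $\restr{f}{U}\colon U \to V$ is a continuous bijection.

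Next I would check that $\restr{f}{U}$ is a $p$-morphism between the posets $U$ and $V$. Order-preservation is inherited from $f$. For the `back' condition: suppose $y \in U$ and $v \in V$ with $f(y) \leqslant v$ (the order here is the one induced on $V \subseteq X$, i.e.\ the order of $X$). Since $f$ is a $p$-morphism on all of $X'$, there is $y' \geqslant y$ in $X'$ with $f(y') = v$; and $y' \geqslant y \geqslant x$ forces $y' \in \upsetarrow x = U$, so $y'$ is the required witness inside $U$. Hence $\restr{f}{U}$ is a continuous bijective $p$-morphism.

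For the inverse, the key input is Lemma \ref{esakia_space_pointed_upset_is_closed}: $\upsetarrow x$ is closed in $X'$, hence a closed subspace of a Priestley (Stone) space, hence itself compact. The image $V = \leftadjunct{f}(\upsetarrow x)$ is then the continuous image of a compact space inside the Hausdorff space $X$, so $V$ with its subspace topology is compact Hausdorff. Now Lemma \ref{closed-function-lemma} (or its corollary) applies to the continuous bijection $\restr{f}{U}\colon U \to V$ from a compact space to a Hausdorff space: it is closed, hence a homeomorphism, so $\restr{f}{U}^{-1}\colon V \to U$ is continuous. It remains to see that $\restr{f}{U}^{-1}$ is a $p$-morphism. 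Order-preservation of the inverse: given $v_1 \leqslant v_2$ in $V$ with preimages $y_1, y_2 \in U$, apply the `back' condition of $\restr{f}{U}$ to $y_1$ and $v_2 \geqslant f(y_1) = v_1$ to get some $y \geqslant y_1$ in $U$ with $f(y) = v_2$; by injectivity $y = y_2$, so $y_1 \leqslant y_2$. The `back' condition for the inverse is then the statement that $\restr{f}{U}$ reflects the order in the strong sense needed, which follows the same way, or more directly: since $\restr{f}{U}$ is a bijective order-isomorphism (order-preserving with order-preserving inverse), its inverse trivially satisfies the `back' condition.

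I expect the main obstacle to be bookkeeping about \emph{which} order is meant on the subspace $V = \leftadjunct{f}(\upsetarrow x)$ and making sure the `back' conditions are stated relative to that induced order rather than the ambient order of $X$ — the two could a priori differ, and one must check that the witnesses produced by the global $p$-morphism property genuinely land in $\upsetarrow x$, which is precisely where $y' \geqslant y \geqslant x$ is used. The compactness argument for continuity of the inverse is routine given the cited lemmas; the only subtlety there is invoking Lemma \ref{esakia_space_pointed_upset_is_closed} to know $\upsetarrow x$ is compact, which is why the Esakia (Priestley) hypothesis is essential and why the openness/closedness assumptions of the previous lemma can indeed be dropped.
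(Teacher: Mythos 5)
Your proposal is correct and follows essentially the same route as the paper: bijectivity from strictness, compactness of $\upsetarrow x$ via Lemma \ref{esakia_space_pointed_upset_is_closed}, Hausdorffness of the image, and then Lemma \ref{closed-function-lemma} with Corollary \ref{continuous-inverse-lemma} to get continuity of the inverse. You merely spell out details the paper leaves implicit (the injectivity argument, the restricted `back' condition, and the order-theoretic properties of the inverse), all of which check out.
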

\begin{proof}
Indeed, because $f$ is strict, it establishes a bijection between $\upsetarrow x$ and $\leftadjunct{f}(\upsetarrow x)$. Because $f$ is a $p$-morphism, $\leftadjunct{f}(\upsetarrow x)=\upsetarrow f(x)$. By Lemma \ref{esakia_space_pointed_upset_is_closed} $\upsetarrow x$ is a closed subset of a compact space; hence, $\upsetarrow x$ is compact. Now, $\leftadjunct{f}(\upsetarrow x)$ is a subspace of a Hausdorff space; hence, it is a Hausdorff space. By leveraging Lemma \ref{closed-function-lemma}, along with Corollary \ref{continuous-inverse-lemma} and assuming that $f$ is a continuous $p$-morphism, we can see that the inverse $f^{-1}$ is also a continuous $p$-morphism.
\end{proof}

The \emph{local sections} of a function $f\colon X'\to X$ over a subset $U\subseteq X$ are functions $s\colon U\to X'$ such that $f\circ s=id_U$. It's important to note that in more standard situations, like, e.g., topological spaces, the images $\leftadjunct{s}(U)$ of continuous local sections $s$ for \'{e}tale function $f$ form a base for the topology of the total space $X'$; see, e.g., \cite[Ch. 2, \S 6]{maclane-moerdijk-sheaves}. In other words, every member of the Heyting algebra of open subsets of $X'$ is a union of images $\leftadjunct{s}(U)$.

In our case, we have demonstrated that continuous strict $p$-morphisms are invertible on principal upsets $\upsetarrow x$. Because $f$ is a $p$-morphism, we also know that $\leftadjunct{f}(\upsetarrow x)=\upsetarrow f(x)$. For arbitrary $x \in X'$, the function $s = \restr{f}{\upsetarrow x}^{-1}$, which is the inverse of $\restr{f}{\upsetarrow x} \colon \upsetarrow x \to \leftadjunct{f}(\upsetarrow x)$, serves as a local section for $f$. In the case when $X$ is a finite Esakia space, principal upsets are clopen subsets because the Stone topology on a finite Esakia space is discrete. And finally, if $X$ is an Esakia space and $C$ is a clopen upset in $X$, then it is evident that $C = \bigcup_{c \in C} \upsetarrow c$. This demonstrates that every element in the Heyting algebra of clopen upsets is a union of images $\leftadjunct{s}(\upsetarrow f(x))$ of local sections of $f$. This is a crucial fact to be aware of.

\section{\'{E}tale Heyting algebras}
In this section, we introduce some notions from universal algebra serving algebraic duals to local isomorphisms of Esakia spaces. The analogues of these notions appear in various fields of mathematics and are studied in different contexts. Structures under consideration in this section can be thought of as an expansion of algebraic structure by a substructure of constants. It is worth noting that a concept seemingly closely related yet distinct in appearance, known as the $\forall$-subalgebra, has been recently considered by \cite[Definition 3.3]{almeida-2023-pi2rule}.

In what follows, we will utilize the notion of the \emph{variety} of algebras of some fixed signature. Readers not familiar with the notion of variety can refer to \cite[Ch. 2, Definition 9.3]{burrisAndSankappanavar} for clarification.

Let $\mathcal{V}$ be a variety of universal algebras, and let $A_0\in \mathcal{V}$ be an algebra in it. We consider the category $\alg{A_0}$ of $A_0$-algebras.

Namely, objects of $\alg{A_0}$ are $\mathcal{V}$-homomorphisms $A_0\to A$, and a morphism from $\alpha\colon A_0\to A$ to $\alpha'\colon A_0\to A'$ is represented by a $\mathcal{V}$-homomorphism $\beta\colon A\to A'$ such that $\beta \circ \alpha = \alpha'$.

Upon closer inspection, a reader may recognize the idea of expansion of the signature by subalgebra of constants here. The idea of expansion of the signature by constants is not novel and is used fruitfully in different contexts in mathematics, e.g., see \cite{taylor2017}, \cite{nouri2020}, \cite[\textsection 3]{daniyarova2012}. Note that indeed $\alg{A_0}$ forms the category of all algebras in another variety with an enriched signature, which we will also denote by $\alg{A_0}$. Operations of $\alg{A_0}$ are operations of $\mathcal{V}$ together with constants (nullary operations) $c_a$, one for each $a\in A_0$, and identities of the variety $\alg{A_0}$ are given by identities of $\mathcal{V}$ together with variable-free identities of the form $w(c_{a_1},\ldots,c_{a_n })=c_{w(a_1,\ldots,a_n)}$ for every generating $n$-ary operation $w$ of $\mathcal{V}$.

Let finally $\etalevariety{A_0}$ be the subvariety of $\alg{A_0}$ generated by $A_0$, i.e., by the identity map of $A_0$ viewed as the \emph{initial object} of the category $\alg{A_0}$. For the definition of the \emph{variety generated} by a class of algebras, please refer to \cite[Ch. 2, Definition 9.4]{burrisAndSankappanavar}. Also, recall that an object $A$ of a category $\category{C}$ is called \emph{initial} if there is exactly one morphism from $A$ to any object $B$ of $\category{C}$.

We refer to algebras from $\etalevariety{A_0}$ as \emph{\'{e}tale $A_0$-algebras}, and the $\mathcal{V}$-homomorphisms $A_0\to A$ that define \'{e}tale $A_0$-algebras as \emph{\'{e}tale homomorphisms}. As we will see, the name \'{e}tale will be subsequently justified for finite Heyting $H$-algebras due to their close relation with strict Esakia morphisms. In this context, the category of \'{e}tale $A_0$-algebras is the smallest subcategory of $\alg{A_0}$ containing $A_0$ and closed under products, subalgebras, and homomorphic images. According to Tarski's theorem \cite{tarski1946}, \cite[Ch. 2, Theorem 9.5]{burrisAndSankappanavar}, $\etalevariety{A_0}=\operatorname{HSP}(A_0)$ and by Birkhoff's theorem \cite{birkhoff1935} it is an equational class, see also \cite[Ch. 2, Theorem 11.9]{burrisAndSankappanavar}.

For Heyting algebra $H$, the corresponding category $\etalevariety{H}$ of \'{e}tale $H$-algebras, i.e., Heyting algebras $A$ equipped with an \'{e}tale homomorphism $H\to A$, admits a nicer and more explicit description compared to the category of arbitrary $H$-algebras $\alg{H}$, i.e., those given by arbitrary Heyting algebra homomorphisms $H\to A$. For instance, pushouts in this category are much simpler to describe than pushouts of general Heyting algebras. They coincide with pushouts computed in the category of distributive lattices and lattice homomorphisms; see Corollary \ref{pushout-corrolary}.

\section{Equivalencies and dualities for \'{etale} algebras over a finite Heyting algebra}
Let's begin this section with some additional definitions and notations for later use.

\begin{definition}
 Let $\category{C}$ and $\category{D}$ be two categories. The category, denoted by $\category{C}^{\category{D}}$, whose objects are (covariant) functors $\functor{F}\colon \category{D}\to\category{C}$ from $\category{D}$ to $\category{C}$, and morphisms are natural transformations between them, is called the category of $\category{C}$-valued (co)\emph{presheaves} over the category $\category{D}$.     
\end{definition}

There also exists an important type of category in which the collection of morphisms between two objects is either empty or contains a single member.

\begin{definition}\label{poset-category}
    A category $\category{C}$ is called a \emph{poset category} if there is at most one morphism between two objects in $\category{C}$.
\end{definition}
 
 Every partially ordered set $X$ can be regarded as a poset-category, where the objects are the elements of $X$ and the morphisms are determined by the partial order relation $\leqslant$ on $X$.

The remaining part of the paper is devoted to investigating the relationship between four categories, namely the category $\category{LH}_{E}/X_H$ of \'{e}tale Esakia morphisms over a finite partially ordered set $X_H$; the category $\etalevariety{H}$ of \'{e}tale $H$-algebras; the category $\category{S}^{X_H}$ of Stone spaces-valued (co)pre-sheaves on the finite partially ordered set $X_H$; and the variety $\mathcal{V}_{\text{\textetaleaxiom{H}}}$ of $H$-algebras validating additional axiom \textetaleaxiom{H} defined as follows $\underset{h\in H}{\bigvee} \left(x\Leftrightarrow c_h\right) = 1$, where $1$ is the greatest element of a Heyting algebra $H$ (note that $x$ is a variable here, and for the $H$-algebra, validating the axiom means that it holds for arbitrary $x$ from the codomain of the $H$-algebra).

Recall that `$\Leftrightarrow$' is not a part of the Heyting algebra signature. For $a, b \in H$, the expression $a \Leftrightarrow b$ is just shorthand notation for $(a \Rightarrow b) \wedge (b \Rightarrow a)$.
Note also that the analogue of the axiom \textetaleaxiom{H} has been studied previously in the context of elementary topoi and is adopted from \cite{jib}.

The diagram in Figure \ref{fig:outline} illustrates the connections (functors) between the main objects studied in this paper. Arrows indicate the direction of the functors involved in subsequent development. Two-headed arrows represent pairs of functors, establishing an equivalence of categories.

\begin{figure}
    \centering
    \begin{tikzpicture}[
    ->,
    shorten >=1pt,
    auto,
    node distance=3.5cm
    ]

    \node[] (leftmost) [] {$\etalevariety{H}$};
    \node[] (second) [right of = leftmost] {$\mathcal{V}_{\text{\textetaleaxiom{H}}}$};
    \node[] (third) [right of = second] {$\category{LH}_{\category{E}}/X$};
    \node[] (fourth) [right of = third] {$\category{S}^{X}$};

    \draw [->, shorten <=3pt, shorten >=3pt] (leftmost) -- (second) node[midway, above] {{\textup{Prop. } \ref{etaleAxiomInEtalevariety}}};

    \draw [<->, shorten <=3pt, shorten >=3pt] (second) -- (third) node[midway, above] {{\textup{Cor. } \ref{Fin_strictEsakiasAreEquivalentToHalgsWithEtaleAxiom}}};

    \draw [<->, shorten <=3pt, shorten >=3pt] (third) -- (fourth) node[midway, above] {\textup{Prop. } \ref{equivalenceOfStricEsakiaAndStonePresheaves}};

    \draw [->, shorten <=3pt, shorten >=3pt, out=195,in=-15] (fourth) to node[midway, below] {\textup{Prop. } \ref{StonePresheavesOnSierpinskiToEtaleVariety}} (leftmost);        
    
    \end{tikzpicture}
    \caption{Map of main results}
    \label{fig:outline}
\end{figure}

Let us start with categories $\etalevariety{H}$ and $\mathcal{V}_{\text{\textetaleaxiom{H}}}$\!\!. It is relatively easy to see that, for a finite Heyting algebra $H$, every member of $\etalevariety{H}$ validates the identity \textetaleaxiom{H}\!\!.
\begin{proposition}\label{etaleAxiomInEtalevariety}
If a Heyting algebra $H$ is finite, then every member of the variety $\etalevariety{H}$ validates the identity \textetaleaxiom{H} given by $\underset{h\in H}{\bigvee} \left(x\Leftrightarrow c_h\right) = 1$.

\end{proposition}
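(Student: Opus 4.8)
The plan is to reduce the claim to a one-line verification on the generator $H$ itself, exploiting that $\etalevariety{H}$ is, by Tarski's theorem, exactly $\operatorname{HSP}(H)$ — where $H$ is viewed as an $H$-algebra through $\mathrm{id}_H\colon H\to H$ — together with the elementary fact that the class of $H$-algebras satisfying a fixed identity is closed under $\operatorname{H}$, $\operatorname{S}$, and $\operatorname{P}$. First I would record why the finiteness hypothesis is what makes the statement meaningful: because $H$ is finite, $\underset{h\in H}{\bigvee}\left(x\Leftrightarrow c_h\right)$ is a genuine term in the finitary signature of $\alg{H}$, namely a finite join of the terms $x\Leftrightarrow c_h$, so $\textetaleaxiom{H}$ is an honest identity and not merely an infinitary scheme.

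Next I would invoke that $\operatorname{Mod}(\textetaleaxiom{H})$ is a variety, hence closed under homomorphic images, subalgebras, and products; since it contains $H$ it therefore contains the smallest variety generated by $H$, which is $\etalevariety{H}$. Consequently it suffices to check that $H$ itself validates $\textetaleaxiom{H}$. Now, in $H$ regarded as an $H$-algebra via $\mathrm{id}_H$, the constant $c_h$ is interpreted as $h$. Fixing an arbitrary value $a\in H$ for the variable $x$ and choosing the index $h=a$ in the join, the corresponding disjunct is $a\Leftrightarrow c_a=(a\Rightarrow a)\wedge(a\Rightarrow a)=1\wedge 1=1$; since $1$ is the top element of $H$, the entire join equals $1$. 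As $a$ was arbitrary, $H\models\textetaleaxiom{H}$, and hence so does every member of $\etalevariety{H}=\operatorname{HSP}(H)$.

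I do not expect any real obstacle here: the two points that need care are (i) that finiteness of $H$ is used exactly once, to guarantee that the join is a term, and (ii) that passing from ``$\etalevariety{H}$ is generated by $H$'' to ``it suffices to verify the identity on $H$'' is legitimate, which is immediate because satisfaction of identities transfers along $\operatorname{H}$, $\operatorname{S}$, $\operatorname{P}$. In particular no description of free algebras in $\etalevariety{H}$ and no structural analysis of $\operatorname{HSP}(H)$ is required.
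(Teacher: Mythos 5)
Your proposal is correct and follows essentially the same route as the paper: verify the identity on the generator $1_H\colon H\to H$ (where $c_h$ is interpreted as $h$, so the disjunct $x\Leftrightarrow c_h$ with $h=x$ equals $1$), then transfer it to all of $\etalevariety{H}$ because satisfaction of identities is preserved under $\operatorname{H}$, $\operatorname{S}$, $\operatorname{P}$. Your explicit remarks that finiteness of $H$ is what makes the join a genuine finitary term, and that $\operatorname{Mod}(\textetaleaxiom{H})$ is itself a variety containing the generator, merely spell out steps the paper leaves implicit.
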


\begin{proof}
First, note that the identity homomorphism $1_H\colon H\to H$ validates \textetaleaxiom{H}\!\!. Indeed, if we substitute $x$ by an arbitrary element $h\in H$, then at least one member of the disjunction $\underset{h\in H}{\bigvee} \left(x\Leftrightarrow c_h\right) = 1$, and in particular, $h\Leftrightarrow c_h$, is equal to $1$, because $h=c_h$ by definition. 

On the other hand, $\etalevariety{H}$ is generated by $1_H$, and hence all the algebras in $\etalevariety{H}$ validate the \textetaleaxiom{H}\!{\!.}
\end{proof}

Now, let us consider the categories $\category{LH}_{\category{E}}/X$ and $\mathcal{V}{\text{\textetaleaxiom{H}}}$\!{\!.} Proposition \ref{strictEsakiasAreEquivalentToHalgsWithEtaleAxiom} establishes a connection between them. But first, we prove a couple of auxiliary lemmas. 

\begin{lemma}\label{aux-lemma-1-identity}
If for elements $x,y,z$ in a Heyting algebra $H$ holds $(y\wedge x)=(y\wedge z)$, then $y\leq (x\Leftrightarrow z)$.
\end{lemma}
\begin{proof}
    If $(y\wedge x)=(y\wedge z)$, then $(y\wedge x)=(y\wedge z) \leq z$ and $(y\wedge z)=(y\wedge x)\leq x$. Hence $y\leq (x\Rightarrow z)$ and $y\leq (z\Rightarrow x)$. The last holds if and only if $y\leq (x\Leftrightarrow z)$.
\end{proof}

\begin{lemma}\label{aux-lemma-2-identity}
Let $c\colon H\to A$ be a Heyting $H$-algebra where $H$ is complete. Let $x\in A$ be a fixed element of $A$ and $G\subseteq A$ be a subset of $A$ such that $1_A\in A$ is representable as a join $1_A=\bigvee_{y\in G} y$ (possibly infinite). Suppose also that for arbitrary elements $y\in G$ there exists $h_y\in H$ such that $y\wedge x = y\wedge c(h_y)$, then $\underset{h\in H}{\bigvee} \left(x\Leftrightarrow c(h)\right) = 1$ holds for $c$.
\end{lemma}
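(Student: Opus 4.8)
The plan is to reduce the statement to Lemma \ref{aux-lemma-1-identity} together with the hypothesis that $1_A$ is the join of $G$. First I would fix an arbitrary $y \in G$ and apply Lemma \ref{aux-lemma-1-identity} \emph{inside the Heyting algebra $A$} to the triple $(y, x, c(h_y))$: the hypothesis of that lemma, namely $y \wedge x = y \wedge c(h_y)$, is exactly what we are given for $y$, so it yields $y \leq \left(x \Leftrightarrow c(h_y)\right)$. Since $h_y \in H$, the element $x \Leftrightarrow c(h_y)$ occurs among the terms of the (possibly infinite) join $\bigvee_{h\in H}\left(x \Leftrightarrow c(h)\right)$.

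Next I would address the only mildly delicate point: $A$ need not be complete, so one cannot blithely form $\bigvee_{h\in H}\left(x\Leftrightarrow c(h)\right)$ and compare. Instead I argue with upper bounds. Let $u \in A$ be any upper bound of the family $\{\,x \Leftrightarrow c(h) : h \in H\,\}$. Then for every $y \in G$ we have $y \leq \left(x \Leftrightarrow c(h_y)\right) \leq u$, so $u$ is an upper bound of $G$; since $1_A = \bigvee_{y\in G} y$ is by hypothesis the \emph{least} upper bound of $G$, we get $1_A \leq u$. As $u \leq 1_A$ trivially, $u = 1_A$. Hence $1_A$ is the least upper bound of $\{\,x \Leftrightarrow c(h) : h\in H\,\}$; that is, $\bigvee_{h\in H}\left(x \Leftrightarrow c(h)\right)$ exists and equals $1$, which is the assertion. (Completeness of $H$ is not actually needed for this argument; presumably it is imposed because it holds in the intended applications and is used elsewhere.)

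I do not expect a real obstacle here; the only thing to be careful about is not implicitly assuming $A$ is complete, which is why the argument is phrased via upper bounds rather than by directly manipulating an infinite join. When the relevant join is known to exist a priori (e.g.\ $A$ complete, or $G$ and $H$ finite), the proof collapses to the one-line chain $1_A = \bigvee_{y\in G} y \leq \bigvee_{y\in G}\left(x\Leftrightarrow c(h_y)\right) \leq \bigvee_{h\in H}\left(x\Leftrightarrow c(h)\right) \leq 1_A$, using Lemma \ref{aux-lemma-1-identity} for the first inequality and monotonicity of joins for the second.
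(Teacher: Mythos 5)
Your proof is correct and follows essentially the same route as the paper: apply Lemma \ref{aux-lemma-1-identity} to each $y\in G$ to get $y\leq\left(x\Leftrightarrow c(h_y)\right)$ and then pass to the join $1=\bigvee_{y\in G}y$, which is exactly the paper's two-line argument. Your extra care in phrasing the conclusion via upper bounds (so as not to presuppose that $\bigvee_{h\in H}\left(x\Leftrightarrow c(h)\right)$ exists in $A$) is a small but legitimate tightening of the paper's version, which simply writes the join directly.
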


\begin{proof}
Fix some element $x\in A$. Using the Lemma \ref{aux-lemma-1-identity}, for an arbitrary element $y\in G$, there exists $h_y\in H$ such that $y\leq (x\Leftrightarrow c(h_y))$. Then $1=\bigvee_{y\in G} y \leq \underset{h_y\in H}{\bigvee} \left(x\Leftrightarrow c(h_y)\right)$.
\end{proof}

\begin{proposition}\label{strictEsakiasAreEquivalentToHalgsWithEtaleAxiom}
Let ${f\colon X'\to X}$ be a $p$-morphism between partially ordered sets $X'$ and $X$. $f$ is a strict $p$-morphism if and only if for every upset $U$ in $X'$, in the Heyting algebra of upsets of $X'$ the following identity holds: \[\underset{W\in \category{Up}(X)}{\bigvee} \!\!\!\!\!\!\!\!\left(U\Leftrightarrow \rightadjunct{f}(W)\right) = X'.\]
\end{proposition}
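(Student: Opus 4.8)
The statement is an "iff" between a purely order-theoretic condition (being a strict $p$-morphism) and an algebraic identity in $\category{Up}(X')$. I would prove each direction separately, and in both directions the key is to unwind the Heyting-algebra operations in the Alexandroff algebra of upsets. Recall that in $\category{Up}(X')$ the implication is $U \Rightarrow V = \{x : \upsetarrow x \cap U \subseteq V\}$, and the top element is $X'$. So the identity $\bigvee_{W \in \category{Up}(X)} (U \Leftrightarrow \rightadjunct{f}(W)) = X'$ says exactly: for every $x \in X'$ there exists an upset $W$ of $X$ with $x \in U \Leftrightarrow \rightadjunct{f}(W)$, i.e. $\upsetarrow x \cap U = \upsetarrow x \cap \rightadjunct{f}(W)$. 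This is the reformulation I would state first, using the description of meet, join and implication in an Alexandroff Heyting algebra (citing that $\category{Up}(X')$ is a Heyting algebra and giving the formula for $\Rightarrow$), and it is essentially Lemma~\ref{aux-lemma-1-identity}/\ref{aux-lemma-2-identity} specialized to this setting.

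**($\Rightarrow$) Strict $p$-morphism implies the identity.** Assume $f$ is a strict $p$-morphism and fix $x \in X'$ and an upset $U$ of $X'$. I want an upset $W$ of $X$ with $\upsetarrow x \cap U = \upsetarrow x \cap \rightadjunct{f}(W)$. The natural candidate is $W = \leftadjunct{f}(\upsetarrow x \cap U)$, or rather its upward closure; but because $f$ is a $p$-morphism, by the earlier lemma $\leftadjunct{f}(\upsetarrow x) = \upsetarrow f(x)$, and strictness gives that $\restr{f}{\upsetarrow x}$ is a bijection onto $\upsetarrow f(x)$ that is moreover an isomorphism of posets (the inverse is order-preserving, again by the strict $p$-morphism version of Lemma~\ref{contunuity-and-openness-is-monotonicity-and-pmorhismness}, or directly: if $f(y) \leqslant f(y')$ with $y,y' \geqslant x$ then by back there is $y'' \geqslant y$ with $f(y'') = f(y')$, and uniqueness forces $y'' = y'$, so $y \leqslant y'$). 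Hence $\restr{f}{\upsetarrow x}$ carries the upset $\upsetarrow x \cap U$ of $\upsetarrow x$ to an upset of $\upsetarrow f(x)$, and I can take $W$ to be any upset of $X$ whose intersection with $\upsetarrow f(x)$ equals $\leftadjunct{f}(\upsetarrow x \cap U)$ — for instance $W = \leftadjunct{f}(\upsetarrow x \cap U) \cup (X \setminus \upsetarrow f(x))$, which one checks is an upset since $\leftadjunct{f}(\upsetarrow x \cap U)$ is an upset of $\upsetarrow f(x)$ and $\upsetarrow f(x)$ is an upset. Then $\rightadjunct{f}(W) \cap \upsetarrow x = (\restr{f}{\upsetarrow x})^{-1}(\leftadjunct{f}(\upsetarrow x \cap U)) = \upsetarrow x \cap U$ by bijectivity, which is what was needed.

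**($\Leftarrow$) The identity implies strictness.** Conversely, assume the identity holds for every upset $U$; I must show that for $x \in X'$ and $y \geqslant f(x)$ in $X$ the element $x' \geqslant x$ with $f(x') = y$ (which exists since $f$ is a $p$-morphism) is unique. Suppose $x_1, x_2 \geqslant x$ both map to $y$ with $x_1 \neq x_2$; I'd like to choose $U$ to separate them and derive a contradiction from the identity instantiated at the point $x$. Take $U = \upsetarrow x_1$ (an upset). By hypothesis there is an upset $W$ of $X$ with $\upsetarrow x \cap \upsetarrow x_1 = \upsetarrow x \cap \rightadjunct{f}(W)$. Now $x_1 \in \upsetarrow x \cap \upsetarrow x_1$, so $x_1 \in \rightadjunct{f}(W)$, i.e. $y = f(x_1) \in W$; but then $x_2 \in \rightadjunct{f}(W)$ as well (since $f(x_2) = y \in W$) and $x_2 \in \upsetarrow x$, so $x_2 \in \upsetarrow x \cap \rightadjunct{f}(W) = \upsetarrow x \cap \upsetarrow x_1$, forcing $x_1 \leqslant x_2$. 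By symmetry (using $U = \upsetarrow x_2$) we get $x_2 \leqslant x_1$, hence $x_1 = x_2$, a contradiction. (One must also check $x \in \upsetarrow x \cap \upsetarrow x_1$ is not needed; it suffices to evaluate membership of $x_1, x_2$, which lie in $\upsetarrow x$.)

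**Expected main obstacle.** The only genuinely delicate point is getting the equivalences between the algebraic identity and the pointwise condition $\upsetarrow x \cap U = \upsetarrow x \cap \rightadjunct{f}(W)$ exactly right — in particular being careful that the join over \emph{all} upsets $W$ of $X$ equals $X'$ iff \emph{for each} $x$ \emph{some} $W$ works, and that $x \in U \Leftrightarrow \rightadjunct{f}(W)$ unwinds precisely to that equality of sets via the Heyting implication formula in an Alexandroff algebra. Once this translation is pinned down (essentially a restatement of Lemmas~\ref{aux-lemma-1-identity} and~\ref{aux-lemma-2-identity} in the concrete lattice $\category{Up}(X')$), both implications are short set-theoretic arguments; the ($\Rightarrow$) direction additionally needs the observation that a strict $p$-morphism restricts to a poset isomorphism $\upsetarrow x \to \upsetarrow f(x)$, which follows from the "back" condition plus uniqueness.
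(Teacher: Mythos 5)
Your proposal is correct and follows essentially the same route as the paper: both directions hinge on reducing the identity to the pointwise condition $\upsetarrow x \cap U = \upsetarrow x \cap \rightadjunct{f}(W)$ (the content of Lemmas~\ref{aux-lemma-1-identity} and~\ref{aux-lemma-2-identity}), with $W$ built from $\leftadjunct{f}(\upsetarrow x\cap U)$ in the forward direction and $U=\upsetarrow x_1$ instantiated in the converse. The only cosmetic differences are that you pad $W$ by $X\setminus \upsetarrow f(x)$ where the paper takes $W=\leftadjunct{f}(\upsetarrow x\cap U)$ directly (already an upset since $f$ is a $p$-morphism), and that you derive $x_1\leqslant x_2$ and $x_2\leqslant x_1$ directly where the paper argues by contradiction.
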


\begin{proof}
First, suppose that $f$ is a strict $p$-morphism. By the Lemma \ref{aux-lemma-2-identity}, to demonstrate the validity of the identity \textetaleaxiom{H}\!{\!,} it is sufficient to, for any $y\in X'$ and any fixed upset $U$ of $X'$, find an upset $W$ such that $\upsetarrow y\cap U = \upsetarrow y\cap \rightadjunct{f}(W)$.

Let $W=\leftadjunct{f}(\upsetarrow y\cap U)$. Note that $W$ is upset because $f$ is a $p$-morphism. It is also obvious that $\upsetarrow  y\cap U\subseteq \rightadjunct{f}(W)$. Hence $\upsetarrow  y\cap U \subseteq \upsetarrow  y\cap \rightadjunct{f}(W)$. On the other hand, since $f$ is strict, $\upsetarrow y\cap \rightadjunct{f}(W) \subseteq U$, hence $\upsetarrow y\cap \rightadjunct{f}(W) \subseteq \upsetarrow y\cap U$.

Now suppose $f$ validates the identity \textetaleaxiom{H}\!\!. $f$ is a $p$-morphism. To demonstrate the strictness of the $p$-morphism $f$, it is sufficient to show that, for any $y\in X'$ and any elements $y \leqslant y_1, y \leqslant y_2$ above $y$, if $f(y_1) = f(y_2)$, then $y_1 = y_2$.

Suppose, to the contrary, that $f(y_1) = f(y_2)$ and $y_1 \neq y_2$. Without loss of generality, we assume $y_1 \nleqslant y_2$. Since \textetaleaxiom{H} is valid for $f$, for $U = \upsetarrow y_1$, there should exist an upset $W$ in $X$ such that $\upsetarrow y \cap \rightadjunct{f}(W) = \upsetarrow y \cap U = \upsetarrow y_1$. Since $y_2 \notin \upsetarrow y_1$, we also have $y_2 \notin \upsetarrow y \cap \rightadjunct{f}(W)$. Since $y_2 \in \upsetarrow y$, we conclude that $y_2 \notin \rightadjunct{f}(W)$, and herefore, $f(y_2) \notin W$. Recall that we also assumed $f(y_1) = f(y_2)$; hence, $y_1 \notin \rightadjunct{f}(W)$. Therefore, $y_1 \notin \upsetarrow y \cap \rightadjunct{f}(W) = \upsetarrow y_1$, leading to a contradiction.
\end{proof}

The last proposition easily implies the following corollary.
\begin{corollary}\label{Fin_strictEsakiasAreEquivalentToHalgsWithEtaleAxiom}
Let $f\colon X'\to X$ be an Esakia morphism over a finite Esakia space $X$. $f$ is a strict $p$-morphism if and only if $\rightadjunct{f}\colon H_X\to H_Y$ validates the identity \textetaleaxiom{H}\!\!.
\end{corollary}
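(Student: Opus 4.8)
The plan is to reduce Corollary \ref{Fin_strictEsakiasAreEquivalentToHalgsWithEtaleAxiom} to Proposition \ref{strictEsakiasAreEquivalentToHalgsWithEtaleAxiom} by translating the latter's internal identity in $\category{Up}(X')$ into the constant-enriched identity \textetaleaxiom{H} for the Heyting $H_X$-algebra $\rightadjunct{f}\colon H_X\to H_{X'}$. First I would recall that, since $X$ is a finite Esakia space, its Stone topology is discrete, so $H_X = \category{Up}(X)$ (all upsets are clopen), and likewise $X'$ is finite (its dual $H_{X'}$ is the homomorphic-image side and need not a priori be finite, but $f$ continuous $p$-morphism into finite $X$ — actually I should be careful here: $X'$ need not be finite, so $H_{X'}$ is the Heyting algebra of clopen upsets of $X'$, not all upsets). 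So the honest route is: the identity \textetaleaxiom{H} for $\rightadjunct{f}$ reads $\bigvee_{h\in H_X}\bigl(x \Leftrightarrow c_h\bigr) = 1_{H_{X'}}$ for every $x \in H_{X'}$, where $c_h = \rightadjunct{f}(h)$ and the join is finite because $H_X$ is finite. Under the identification $h \in H_X \leftrightarrow W \in \category{Up}(X)$ and $x \in H_{X'} \leftrightarrow$ a clopen upset $U$ of $X'$, this is exactly $\bigvee_{W\in\category{Up}(X)}\bigl(U \Leftrightarrow \rightadjunct{f}(W)\bigr) = X'$, which is the identity in Proposition \ref{strictEsakiasAreEquivalentToHalgsWithEtaleAxiom} — but quantified only over \emph{clopen} upsets $U$ of $X'$ rather than all upsets.

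The substantive point, then, is to check that restricting the quantifier to clopen upsets loses nothing. For the direction ``$f$ strict $\Rightarrow$ \textetaleaxiom{H}'', this is immediate: Proposition \ref{strictEsakiasAreEquivalentToHalgsWithEtaleAxiom} gives the identity for \emph{all} upsets of $X'$, in particular for the clopen ones, and one just has to note that when $W$ ranges over all of $\category{Up}(X) = H_X$ the finite join $\bigvee_W(U \Leftrightarrow \rightadjunct{f}(W))$ is computed the same way in $\category{Up}(X')$ and in its Heyting subalgebra $H_{X'}$ (finite joins and the Heyting operations are absolute for the subalgebra inclusion $H_{X'}\hookrightarrow\category{Up}(X')$). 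For the converse, ``\textetaleaxiom{H} $\Rightarrow$ $f$ strict'', I would inspect the proof of Proposition \ref{strictEsakiasAreEquivalentToHalgsWithEtaleAxiom}: the only instance of the identity it actually uses is with $U = \upsetarrow y_1$ for a point $y_1 \in X'$, and in a finite — wait, $X'$ need not be finite. Here I would instead observe that $\upsetarrow y_1$, while a closed upset, may fail to be clopen in an infinite $X'$; so to run the contradiction argument I need a \emph{clopen} upset separating $y_2$ from $y_1$. This is exactly where I would invoke the Priestley separation axiom: from $y_1 \nleqslant y_2$ (equivalently $y_2 \notin \upsetarrow y_1$, after relabelling as in the original proof so that $y_1 \nleqslant y_2$) there is a clopen upset $U$ with $y_1 \in U$, $y_2 \notin U$, and then the same argument goes through verbatim with this $U$ in place of $\upsetarrow y_1$.

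So the structure I would write is: (1) fix the dictionary $H_X = \category{Up}(X)$ (using finiteness of $X$), $c_h = \rightadjunct{f}(h)$, and note the join in \textetaleaxiom{H} is finite; (2) observe $H_{X'}\hookrightarrow \category{Up}(X')$ is a Heyting subalgebra so the expression $\bigvee_{W}(U\Leftrightarrow\rightadjunct{f}(W))$ has the same value whether read in $H_{X'}$ or in $\category{Up}(X')$; (3) for the forward direction, apply Proposition \ref{strictEsakiasAreEquivalentToHalgsWithEtaleAxiom} and restrict to clopen $U$; (4) for the converse, re-run the contradiction argument of Proposition \ref{strictEsakiasAreEquivalentToHalgsWithEtaleAxiom} but use Priestley separation to pick a \emph{clopen} upset $U$ separating the two competing preimages, so that \textetaleaxiom{H} — which only sees clopen $x$ — still applies. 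The main obstacle is precisely this last point: the internal identity of Proposition \ref{strictEsakiasAreEquivalentToHalgsWithEtaleAxiom} quantifies over all upsets, whereas \textetaleaxiom{H} only constrains clopen ones, and one must verify (via Priestley separation) that the weaker hypothesis still suffices to force strictness. Everything else is bookkeeping about the subalgebra inclusion and the discreteness of the topology on the finite space $X$.
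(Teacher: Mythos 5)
Your proposal is correct, and it follows the same overall strategy as the paper — reduce the corollary to Proposition \ref{strictEsakiasAreEquivalentToHalgsWithEtaleAxiom} using the finiteness of $X$ — but you are substantially more careful than the paper, whose entire proof is the single sentence that a finite Esakia space is just a finite poset. That one-liner silently ignores the issue you isolate: the proposition's identity is stated in the algebra of \emph{all} upsets of $X'$ and quantified over all upsets $U$, whereas \textetaleaxiom{H} for $\rightadjunct{f}\colon H_X\to H_{X'}$ only constrains the \emph{clopen} upsets of $X'$, and since $X'$ may be infinite (the fibers in $\category{LH}_{\category{E}}/X$ are arbitrary Stone spaces) these genuinely differ. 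Your two patches are exactly what is needed: for the forward direction, that $H_{X'}$ is a Heyting subalgebra of $\category{Up}(X')$ (meets and joins are intersections and unions of clopens, and the Esakia condition guarantees that the relative pseudocomplement of clopen upsets computed in $\category{Up}(X')$ is again clopen), so the finite join $\bigvee_{W\in\category{Up}(X)}\left(U\Leftrightarrow \rightadjunct{f}(W)\right)$ has the same value in both algebras; and for the converse, replacing $\upsetarrow y_1$ (closed but possibly not clopen) by a clopen upset separating $y_1$ from $y_2$ via the Priestley separation axiom, after which the contradiction argument of the proposition runs verbatim with $\upsetarrow y\cap U$ in place of $\upsetarrow y_1$. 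So your write-up is not just correct — it supplies a justification the paper's proof only gestures at.
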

\begin{proof}
    Indeed, we can safely use the proposition \ref{strictEsakiasAreEquivalentToHalgsWithEtaleAxiom} because a finite Esakia space $X$ is just a finite partially ordered set. 
\end{proof}

Now let us consider the categories $\category{LH}_{\category{E}}/X$ and $\category{S}^{X_H}$. Let $f\colon (X',\leqslant)\to (X,\leqslant)$ be a strict $p$-morphism. Then, for every $x\in X$, we can consider $\rightadjunct{f}(\{x\})$. For every morphism $x_1 \leqslant x_2$ in $X$, according to the 'back' condition for strict $p$-morphisms, one can uniquely define a function $\rightadjunct{f}(\{x_1\}) \to \rightadjunct{f}(\{x_2\})$. This fact suggests that the correspondence just described gives rise to a functor $F_f: (X,\leqslant) \to \category{Set}$. This functor is one part of the pair of functors establishing a connection between $\category{LH}_{\category{E}}/X$ and $\category{S}^{X_H}$ for a finite Esakia space $X_H$.

\begin{proposition}\label{equivalenceOfStricEsakiaAndStonePresheaves}
For a finite Esakia space $X$, the correspondence $f \mapsto F_f$, where $f$ is an object in $\category{LH}_{\category{E}}/X$, gives rise to the equivalence of categories between $\category{LH}_{\category{E}}/X$ and $\category{S}^{X}$.
\end{proposition}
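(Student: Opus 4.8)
The plan is to construct an explicit pair of functors between $\category{LH}_{\category{E}}/X$ and $\category{S}^{X}$ and show they are mutually inverse up to natural isomorphism. In one direction, I would send a strict $p$-morphism $f\colon X'\to X$ to the functor $F_f\colon (X,\leqslant)\to\category{S}$ already sketched before the statement: $F_f(x)=\rightadjunct{f}(\{x\})$, the fibre over $x$, equipped with the discrete (hence Stone, since it is finite) topology, and $F_f(x_1\leqslant x_2)$ the map that sends $z\in\rightadjunct{f}(\{x_1\})$ to the unique element of $\upsetarrow z$ lying over $x_2$ — this exists and is unique precisely because $f$ is a strict $p$-morphism, and functoriality (preservation of identities and composites) follows from the uniqueness clause. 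On morphisms of $\category{LH}_{\category{E}}/X$, a commuting triangle $g\colon X'\to X''$ over $X$ restricts on each fibre to a map $\rightadjunct{f}(\{x\})\to\rightadjunct{f''}(\{x\})$, and these assemble into a natural transformation $F_f\Rightarrow F_{f''}$; one must check naturality, which again reduces to the uniqueness of lifts. Continuity of the fibre maps is automatic because all spaces involved are finite and discrete.

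In the other direction, I would send a functor $P\colon(X,\leqslant)\to\category{S}$ to its "category of elements" (Grothendieck construction), whose total space is $X'=\coprod_{x\in X}P(x)$ with the order $(x_1,a)\leqslant(x_2,b)$ iff $x_1\leqslant x_2$ in $X$ and $P(x_1\leqslant x_2)(a)=b$, and the projection $\pi_P\colon X'\to X$, $(x,a)\mapsto x$. Since $X$ is finite and each $P(x)$ is finite discrete, $X'$ is a finite poset, hence trivially a finite Esakia space; I should verify that $X'$ is indeed a poset (antisymmetry uses antisymmetry of $X$ together with $P$ preserving identities) and that $\pi_P$ is continuous, order-preserving, and a $p$-morphism — the back condition says: given $(x,a)$ and $x\leqslant x'$, the element $(x',P(x\leqslant x')(a))$ lies above $(x,a)$ and projects to $x'$ — and that it is \emph{strict}, since the projection determines the first coordinate and functoriality pins down the second. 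On a natural transformation $\eta\colon P\Rightarrow Q$ I would define the induced map $X'_P\to X'_Q$ by $(x,a)\mapsto(x,\eta_x(a))$, check it is order-preserving and commutes with the projections, giving a morphism in $\category{LH}_{\category{E}}/X$.

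Finally I would exhibit the two natural isomorphisms witnessing the equivalence. Starting from $f\colon X'\to X$, the total space of the Grothendieck construction on $F_f$ is $\coprod_{x}\rightadjunct{f}(\{x\})$, which is canonically bijective with $X'$ via $(x,z)\mapsto z$; I must check this bijection is an order-isomorphism over $X$ — this is exactly where strictness of $f$ is used, since the order on the Grothendieck total space was defined using the unique lifts furnished by $f$ being strict, and it matches the order on $X'$. Starting from $P\colon(X,\leqslant)\to\category{S}$, the fibre of $\pi_P$ over $x$ is $\{x\}\times P(x)\cong P(x)$, and the transition maps agree with those of $P$ by construction, giving $F_{\pi_P}\cong P$; one checks these isomorphisms are natural in $f$ and in $P$ respectively.

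The main obstacle, as I see it, is not any single deep fact but the bookkeeping around \emph{strictness}: every place where I claim a map is well-defined, functorial, or an order-isomorphism rests on the uniqueness half of the strict $p$-morphism condition (Definition \ref{pMorphismDefinition}), and I would need to be careful that the order I put on the Grothendieck total space is the \emph{same} order — not merely isomorphic in some loose sense — as the one coming from $X'$, so that the round-trip $f\mapsto F_f\mapsto\pi_{F_f}$ recovers $f$ on the nose up to canonical iso. A secondary point worth stating explicitly is the reduction used in Corollary \ref{Fin_strictEsakiasAreEquivalentToHalgsWithEtaleAxiom}: on a finite Esakia space the Stone topology is discrete and the only data left is the poset, so "Stone space-valued presheaf" really does mean "$\category{Set}$-valued presheaf landing in finite sets," and the topological content of $\category{S}$ is vacuous here — this should be remarked so the reader does not expect a continuity argument that never materializes.
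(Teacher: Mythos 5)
Your overall strategy coincides with the paper's: the functor $f\mapsto F_f$ with $F_f(x)=\rightadjunct{f}(\{x\})$, the Grothendieck construction $P\mapsto\bigl(\pi_P\colon\coprod_{x\in X}P(x)\to X\bigr)$ in the other direction, and the two natural isomorphisms you describe. However, there is a genuine error running through your write-up: you assume that every space in sight is finite, whereas only the \emph{base} $X$ is finite. An object of $\category{S}^{X}$ assigns to each $x\in X$ an arbitrary Stone space, and correspondingly the total space $X'$ of a strict $p$-morphism $f\colon X'\to X$ over a finite $X$ may well be infinite; this generality is essential, since it is what lets the proposition match up with \'{e}tale $H$-algebras $H\to A$ where $A$ is infinite. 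Your closing remark --- that a Stone-space-valued presheaf on a finite $X$ is just a finite-set-valued presheaf and that ``the topological content of $\category{S}$ is vacuous'' --- is therefore false and would trivialize the statement.

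Concretely, three of your justifications fail on this account. First, the fibres $\rightadjunct{f}(\{x\})$ are clopen subsets of the Stone space $X'$ (clopen because $X$ is discrete and $f$ is continuous), hence Stone spaces, but not finite or discrete; so continuity of the transition maps $\rightadjunct{f}(\{x_1\})\to\rightadjunct{f}(\{x_2\})$ is \emph{not} automatic and must be extracted from the local invertibility of $f$ on principal upsets (the restriction of $f$ to $\upsetarrow z$ is a homeomorphism onto $\upsetarrow f(z)$, as established earlier in the paper). Second, in the reverse direction $\coprod_{x\in X}P(x)$ is a finite coproduct of possibly infinite Stone spaces, not ``trivially a finite Esakia space''; you still owe the verification that the coproduct topology together with your order satisfies the Priestley separation axiom and the Esakia clopen-downset condition. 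Third, the unit isomorphism $X'\cong\coprod_{x}\rightadjunct{f}(\{x\})$ must be checked to be a homeomorphism, not merely an order-isomorphism of finite posets. The parts of your argument that rely only on strictness --- uniqueness of lifts, functoriality, the order on the Grothendieck total space, and the fact that $\pi_P$ is a strict $p$-morphism --- are correct and agree with the paper.
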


\begin{proof}
Let $f\colon X'\to X$ be an object of $\category{LH}_{\category{E}}/X$. Since $X$ is finite, its Stone topology is discrete. Therefore, for any $x \in X$, the subset $\rightadjunct{f}(\{x\})$ is a clopen subset of $X'$.

Subsets $\rightadjunct{f}(\{x\})$ are Stone spaces. Furthermore, since $f\colon X'\to X$ is continuous, for every $x_1\leqslant x_2$, the induced function $\rightadjunct{f}(\{x_1\})\to \rightadjunct{f}(\{x_2\})$ is continuous. Therefore, $F_f$ is an object in the category $\category{S}^X$.

Moreover, for an arrow $g$ between two $p$-morphisms $f_1\colon X'_1\to X$ and $f_2\colon X'_2\to X$, where $g\colon X'_1\to X'_2$ is a continuous order-preserving function such that $f_2\circ g = f_1$, the induced function $\restr{g}{\rightadjunct{f_1}(\{x\})}\colon \rightadjunct{f_1}(\{x\})\to \rightadjunct{f_2}(\{x\})$ is continuous for any $x\in X$. This establishes a natural transformation $F_{f_1}\to F_{f_2}$, allowing us to conclude that we have constructed a functor $\Phi\colon \category{LH}_{\category{E}}/X\to \category{S}^X$.

To construct a functor $\Psi\colon \category{S}^X\to \category{LH}_{\category{E}}/X$ for an object $F\in \category{S}^X$, we set $\Psi(F)$ to be the canonical projection \[\pi_F\colon \underset{x\in X}{\coprod}F(x)\to X.\] Here, $\underset{x\in X}{\coprod}F(x)$ is the finite topological sum of Stone spaces, with the following partial order relation: For $\xi_1\in F(x_1)$ and $\xi_2\in F(x_2)$, it holds that $\xi_1\leqslant \xi_2$ if and only if $x_1\leqslant x_2$, and the function $F_{xy}$ corresponding to this pair sends $\xi_1$ to $\xi_2$.

Now, it is routine to check that the $\underset{x\in X}{\coprod}F(x)$ is an Esakia space, and the function is a continuous strict $p$-morphism with respect to the topology and partial order relation on $\underset{x\in X}{\coprod}F(x)$.

Note that this functor is constructed using the so-called Grothendieck construction and is often denoted as $\int F$ in the literature; see, e.g., \cite[Ch. 1, \S 5]{maclane-moerdijk-sheaves}.

A natural transformation $\gamma\colon F_1\to F_2$ induces a canonical continuous function \[{\underset{x\in X}{\coprod}\gamma_x\colon \underset{x\in X}{\coprod}F_1(x)\to \underset{x\in X}{\coprod}F_2(x)}.\] It is routine to check that this induces a commutative triangle $\Psi(F_1)\to \Psi(F_2)$. Thus, we have completed the construction of the functor $\Psi\colon \category{S}^X\to \category{LH}_{\category{E}}/X$.

Now, note that for any object $f\colon X'\to X$ in $\category{LH}_{\category{E}}/X$, there exists a homeomorphism $X'\to \underset{x\in X}{\coprod}\rightadjunct{f}(\{x\})$. This gives an isomorphism between $\Psi\circ\Phi$ and the identity functor.

Moreover, for an arbitrary functor $G\colon X\to \category{S}$ and $x\in X$, the space $G(x)$ can be identified with $\rightadjunct{f}(\{x\})$; this gives an isomorphism between $G$ and $\Phi\circ \Psi(G)$.

This last observation completes the proof.
\end{proof}

To complete the picture shown in Figure \ref{fig:outline}, we need to establish a connection between the categories $\etalevariety{H_{X}}$ and $\category{S}^{X_{H}}$, where $X_H$ is a finite partially ordered set and $H_X$ is a finite Heyting algebra, which is dual to $X_H$ via Esakia duality.

\begin{definition}
    A \emph{subfunctor} of a functor $\functor{G}\colon \category{D}\to \category{C}$ between categories $\category{D}$ and $\category{C}$ is a pair $(\functor{F}, i)$ where $\functor{F}\colon \category{D}\to \category{C}$ is a functor and $i\colon \functor{F}\to \functor{G}$ is a natural transformation such that its components $i_A\colon \functor{F}(A)\to \functor{G}(A)$ are monomorphisms. Sometimes subfunctors are also called \emph{subpresheaves}.
\end{definition}

\begin{definition}
    The \emph{image} of a morphism $f\colon A\to B$ in a category $\category{C}$ is a universal factorization of $f$ into the composite $A\to \operatorname{Im}(A)\to B$ of an epimorphism and a monomorphism, so that $\operatorname{Im}(A)$ is a subobject of $B$.
\end{definition}

In a case like ours, the notion of an image is well-defined. Specifically, the image of a morphism $f$ is precisely the regular, set-theoretic image of a function. In this context, a subfunctor simply means that for any object $A$ in the category $\category{C}$, the object $\functor{F}(A)$ is a subobject of $G(A)$; and for a morphism $f\colon A\to B$, the corresponding morphism $\functor{F}(f)$ is the restriction of $\functor{G}(f)$ to $\functor{F}(A)$.

\begin{proposition}\label{StonePresheavesOnSierpinskiToEtaleVariety}
For a finite Heyting algebra $H$, the categories $\category{S}^{X_{H}}$ and $\etalevariety{H_X}$ are dually equivalent.
\end{proposition}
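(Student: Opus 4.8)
The plan is to construct the dual equivalence by composing the equivalences already established in Figure~\ref{fig:outline} with Esakia duality, and then to identify the resulting functor explicitly. By Proposition~\ref{equivalenceOfStricEsakiaAndStonePresheaves}, $\category{S}^{X_H} \simeq \category{LH}_{\category{E}}/X_H$; so it suffices to exhibit a dual equivalence between $\category{LH}_{\category{E}}/X_H$ and $\etalevariety{H_X}$. On objects, I would send a strict $p$-morphism $f\colon X'\to X_H$ to the Heyting algebra homomorphism $\rightadjunct{f}\colon H_X = H_{X_H}\to H_{X'}$, where $H_{X'}$ is the Heyting algebra of clopen upsets of $X'$ (the Esakia dual of $X'$); this is the object-part of Esakia duality applied to the slice. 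First I would check that $\rightadjunct{f}$ lands in $\etalevariety{H_X}$: by Corollary~\ref{Fin_strictEsakiasAreEquivalentToHalgsWithEtaleAxiom}, $\rightadjunct{f}$ validates \textetaleaxiom{H}, and one must then argue (using that the variety $\etalevariety{H_X}$ is cut out exactly by \textetaleaxiom{H} among $H_X$-algebras, which is the content of the arc Prop.~\ref{etaleAxiomInEtalevariety}/Cor.~\ref{Fin_strictEsakiasAreEquivalentToHalgsWithEtaleAxiom} read as a biconditional in the finite case) that validating \textetaleaxiom{H} is not merely necessary but sufficient for membership in $\etalevariety{H_X}$. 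On morphisms, a commutative triangle over $X_H$ with top map $g\colon X_1'\to X_2'$ goes to $\rightadjunct{g}\colon H_{X_2'}\to H_{X_1'}$, which is automatically an $H_X$-algebra morphism because Esakia duality is functorial and the triangle over $X_H$ dualizes to a triangle under $H_X$.

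The key steps, in order, are: (1) define the two functors $\category{LH}_{\category{E}}/X_H \to \etalevariety{H_X}^{op}$ and back, the first being $f\mapsto \rightadjunct{f}$ via Esakia duality, the second sending an \'etale homomorphism $\alpha\colon H_X\to A$ to its Esakia dual $X_A\to X_H$ and checking this is a strict $p$-morphism (again via Corollary~\ref{Fin_strictEsakiasAreEquivalentToHalgsWithEtaleAxiom}, since $\alpha$, lying in $\etalevariety{H_X}$, validates \textetaleaxiom{H}); (2) verify both functors restrict/corestrict correctly — i.e. Esakia duals of strict $p$-morphisms over $X_H$ are \'etale $H_X$-algebras and conversely; (3) observe that the unit and counit natural isomorphisms are just the restrictions of those furnished by the full Esakia duality between $\category{E}$ and $\category{H}$, so naturality and the triangle identities are inherited for free; (4) finally compose with the equivalence of Proposition~\ref{equivalenceOfStricEsakiaAndStonePresheaves} to get $\category{S}^{X_H}\simeq \category{LH}_{\category{E}}/X_H \simeq \etalevariety{H_X}^{op}$, i.e. $\category{S}^{X_H}$ and $\etalevariety{H_X}$ are dually equivalent. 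It is worth recording the explicit object correspondence: a Stone-valued presheaf $G\colon X_H\to \category{S}$ corresponds under $\Psi$ to $\pi_G\colon \coprod_{x\in X_H} G(x)\to X_H$, whose Esakia dual is the \'etale $H_X$-algebra of clopen upsets of $\coprod_{x} G(x)$.

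The main obstacle I expect is step~(2) in the direction from algebras to spaces: one must show that every \'etale $H_X$-algebra $\alpha\colon H_X\to A$ dualizes to a \emph{strict} $p$-morphism. The $p$-morphism property is automatic from Esakia duality (every Heyting homomorphism dualizes to a continuous $p$-morphism), but strictness requires invoking that $A$, being in $\etalevariety{H_X} = \operatorname{HSP}(1_{H_X})$, validates \textetaleaxiom{H}, and then feeding this into Proposition~\ref{strictEsakiasAreEquivalentToHalgsWithEtaleAxiom}. The subtle point is that Proposition~\ref{strictEsakiasAreEquivalentToHalgsWithEtaleAxiom} is phrased in terms of the Heyting algebra of \emph{all} upsets of $X'$, whereas for an infinite-underlying-space $X_A$ one works with clopen upsets; here finiteness of $H$ — hence of $X_H$ — together with the fact that each fibre is a (possibly infinite) Stone space must be used carefully, but since $X_H$ is finite the relevant joins $\bigvee_{W\in\category{Up}(X_H)}$ are finite and the clopen-upset algebra of $X_A$ suffices to carry the argument. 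Once strictness is secured, the remainder is bookkeeping: functoriality, and checking that the natural isomorphisms of Esakia duality respect the slice/coslice structure, which is routine.
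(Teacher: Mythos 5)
Your overall architecture (compose Proposition \ref{equivalenceOfStricEsakiaAndStonePresheaves} with Esakia duality restricted to the slice) is the right skeleton, and your reduction of the whole statement to the single claim that the Esakia dual of a strict $p$-morphism over $X_H$ lands in $\etalevariety{H_X}$ correctly isolates where the work is. But the way you discharge that claim is circular. You propose to deduce membership in $\etalevariety{H_X}$ from the fact that $\rightadjunct{f}$ validates \textetaleaxiom{H}\!, ``using that the variety $\etalevariety{H_X}$ is cut out exactly by \textetaleaxiom{H}\!''. The paper has only proved one inclusion at this point: Proposition \ref{etaleAxiomInEtalevariety} says every member of $\etalevariety{H}$ validates the identity, i.e.\ $\etalevariety{H}\subseteq \mathcal{V}_{\textetaleaxiom{H}}$\!. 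The reverse inclusion --- that validating \textetaleaxiom{H} suffices for membership in $\operatorname{HSP}(1_H)$ --- is precisely what the cycle of functors in Figure \ref{fig:outline} is designed to establish, and the arc you are asked to prove here is the one that closes that cycle. Corollary \ref{Fin_strictEsakiasAreEquivalentToHalgsWithEtaleAxiom} does not help either: it is a biconditional between strictness and validity of the identity, not between validity and membership in the generated variety. So, as written, step (2) of your plan in the direction from spaces to algebras rests on a fact that is a consequence of the proposition rather than an available ingredient.

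The paper closes this gap with a direct $\operatorname{HSP}$ argument that you would need to reproduce: for $F\in\category{S}^{X_H}$ it embeds the dual algebra $H(F)$ of $\int F$ into the product $\prod_{x,\xi}\category{Up}(\upsetarrow x)$ via maps $m_{x,\xi}$ sending a clopen subfunctor $F'$ to $\{y\mid x\leqslant y \text{ and } \restr{\xi}{xy}\in F'(y)\}$; it verifies that each $m_{x,\xi}$ is a Heyting homomorphism by dualizing it back to a continuous $p$-morphism $\upsetarrow x\to\int F$ (checking preservation of $\Rightarrow$ directly is awkward); and it observes that each factor $\category{Up}(\upsetarrow x)$ is a homomorphic image of $H_X$ because $\upsetarrow x\hookrightarrow X$ is a continuous $p$-morphism. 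Membership in $\etalevariety{H_X}=\operatorname{HSP}(H_X)$ then follows from Tarski's theorem with no appeal to the identity \textetaleaxiom{H} at all. Your remaining steps --- the algebras-to-spaces direction via Proposition \ref{etaleAxiomInEtalevariety} together with Corollary \ref{Fin_strictEsakiasAreEquivalentToHalgsWithEtaleAxiom}, functoriality, and inheriting the unit and counit from Esakia duality --- are sound, but without replacing the circular step by something like this embedding argument the proof does not go through.
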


\begin{proof}
The objects of $\category{S}^{X_H}$ are covariant functors $F\colon X_H\to \category{S}$.
Hence $F(X_H)$ is a diagram of Stone spaces with $F_{xy}\colon F(x)\to F(y)$ for each $x\leqslant y$.

Using the Grothendieck construction, we construct a strict $p$-morphism for each functor $F$, the canonical projection $\pi\colon \int F\to X_H$.

In fact, $\int F$ is an Esakia space with the underlying set $\coprod_{x\in X_H} F(x)$, equipped with the induced topology of coproduct and the partial order relation, defined as $\xi \leqslant \restr{\xi}{xy}$ for each $x\in X_H$, $\xi\in F(x)$, where $\restr{\xi}{xy}=F_{xy}(\xi)\in F(y)$ for $x\leqslant y$; see the Proposition \ref{equivalenceOfStricEsakiaAndStonePresheaves}.

Our goal is to show that for a finite poset $X_H$, the Esakia dual of the morphism $\pi\colon \int F\to X$ belongs to $\etalevariety{H_X}$.

Let $H(F)$ be the Esakia dual of $\int F$. We claim that $H(F)$ is isomorphic to a subalgebra of $\prod_{\begin{subarray}{l}x\in X \\ \xi\in F(x)\end{subarray}}\!\!{\category{Up}(\upsetarrow x)}$, where $\category{Up}(\upsetarrow x)$ denotes the algebra of upsets in $\upsetarrow x$.

The elements of $H(F)$ are clopen upsets of $\int F$; these are the clopen subfunctors of $F$. In other words, they are functors $F'$ such that $F'(x) \subseteq F(x)$ for each $x \in X$.

It is important to note that subfunctoriality on arrows is necessary to ensure that the clopen subset is upward-closed. Additionally, both $F'(x)$ and $F(X)$ must be clopens in $\int F$ (in our case, given that $X$ is finite, $F(x)$ are indeed clopens).

Let, for $x\in X$ and $\xi\in F(x)$, a function $m_{x,\xi}\colon H(F)\to \category{Up}(\upsetarrow x)$ be defined as follows: for a subfunctor $F'\subseteq F$, let $F'\mapsto \{y \mid x\leqslant y \text{ and } \restr{\xi}{xy}\in F'(y)\}$. Note that if $y \in m_{x,\xi}(F')$ and $y \leqslant z$, then $x \leqslant z$ and $\restr{\xi}{xz} = \restr{(\restr{\xi}{xy})}{yz} \in F'(z)$, so $m_{x,\xi}(F')$ is an upset.

We claim that for each $x\in X$ and $\xi\in F(x)$, the function $m_{x,\xi}$ is a Heyting algebra homomorphism. Knowing this gives us the desired inclusion ${H(F)\to \prod_{\begin{subarray}{l}x\in X \ \xi\in F(x)\end{subarray}}{\category{Up}(\upsetarrow x)}}$.

For two clopen subfunctors of $F$, $F'_1$ and $F'_2$, the meet $F'_1\wedge F'_2$ at $x\in X$ is defined by $F'_1(x)\cap F'_2(x)$ and $m_{x,\xi}(F'_1\wedge F'_2)=\{y\mid x\leqslant y \text{ and } \restr{\xi}{xy}\in (F'_1\wedge F'_2)(y)\}$.

In a similar vein, $F'_1(x)\cup F'_2(x)$ defines the join $F'_1 \vee F'_2$ at $x\in X$, and $m_{x,\xi}(F'_1\vee F'_2)=\{y\mid x\leqslant y \text{ and } \restr{\xi}{xy}\in (F'_1\vee F'_2)(y)\}$.

Straightforward computations reveal that $m_{x,\xi}$ preserves both the meet and join operations.

For two clopen subfunctors $F'_1$ and $F'_2$, the implication $(F_1\Rightarrow F_2)(x)$ at $x\in X$ should be defined as the greatest clopen subfunctor $F'_3$ of $F$, such that $F'_3 \wedge F'_1$ is a subfunctor of $F'_2$.

By definition, the implication $U_1\Rightarrow U_2$ in $\category{Up}\left(\upsetarrow x\right)$ is the greatest upward-closed subset of $\left(\upsetarrow x\setminus U_1\right)\cup U_2$. Therefore, the implication $m_{x,\xi}(F'_1(x))\Rightarrow m_{x,\xi}(F'_2(x))$ is equal to the greatest upward-closed subset of \[\{y\mid x\leqslant y \text{ and } \restr{\xi}{xy}\not \in F'_1(y)\} \cup \{y\mid x\leqslant y \text{ and } \restr{\xi}{xy}\in F'_2(y)\}.\]

At this point, we should note that proving the direct preservation of the implication seems challenging. However, we will see shortly that using Esakia duality simplifies our task.

To prove that $m_{x,\xi}\colon H(F)\to \category{Up}(\upsetarrow x)$ is a Heyting algebra homomorphism, we look at the Esakia dual function $\left(m_{x,\xi}\right)_{\category{E}}$ that connects the Esakia duals of the codomain and the domain of $m_{x,\xi}$. We need to prove that the function $\left(m_{x,\xi}\right)_{\category{E}}\colon \upsetarrow x\to\int F$ is a continuous $p$-morphism.

Remember that $\pi\colon \int F\to X$ is a strict $p$-morphism and that $\upsetarrow \xi$ and $\upsetarrow\pi(\xi)=\upsetarrow x$ are the smallest open neighborhoods of $\xi$ and $x$ in the upset topology of $\int F$ and $X$. Hence, $\pi\colon \upsetarrow \xi \to \upsetarrow \pi(\xi)$ is an isomorphism.

On the other hand, to put it simply, $\left(m_{x,\xi}\right)_{\category{E}}\colon \upsetarrow x\to\int F$ is equal to the following composition: \[\upsetarrow x\xrightarrow{\pi^{-1}} \upsetarrow \xi\xhookrightarrow{\upsetarrow \xi\cap F'(-)} \int F.\]

Firstly, $\upsetarrow x$ is discrete, and $\left(m_{x,\xi}\right)_{\category{E}}$ is continuous. Secondly, suppose $y \leqslant z$ in $\upsetarrow x$, then $\restr{\xi}{xz}=\restr{(\restr{\xi}{xy})}{yz}$, hence $\restr{\xi}{xy}\leqslant \restr{\xi}{xz}$, and $\left(m_{x,\xi}\right)_{\category{E}}$ is order-preserving. Lastly, by the definition of the order relation on $\int F$, we know that $\restr{\xi}{xy}\leqslant \restr{\xi}{xz}$ holds if and only if $y\leqslant z$. Therefore, $\left(m_{x,\xi}\right)_{\category{E}}$ is a $p$-morphism.

As a final step, to demonstrate that the dual of $\pi$ is in the variety $\etalevariety{H_X}$, we need to verify that for arbitrary elements $x\in X$, the algebra $\category{Up}\left(\upsetarrow x\right)$ belongs to $\etalevariety{H_X}$. Note that the latter is indeed the case because, for any $x \in X$, the inclusion $\upsetarrow x \hookrightarrow X$ is a continuous $p$-morphism, and hence $\category{Up}(\upsetarrow x)$ is a homomorphic image of $H_X$.
\end{proof}

We conclude this section by demonstrating some applications of the results we have proven.

\begin{corollary}\label{pushout-corrolary}
Let $c_i:H\to H_i$, $i=1,2$ be objects of the variety $\etalevariety{H}$, for a finite Heyting algebra $H$. Then their coproduct in $\etalevariety{H}$ is isomorphic to the pushout of $c_1$ and $c_2$ in the category of distributive lattices.     
\end{corollary}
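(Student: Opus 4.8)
The plan is to chain together the equivalences assembled in Figure \ref{fig:outline}, reducing the computation of coproducts in $\etalevariety{H}$ to a computation in a category where colimits are transparent, namely the presheaf category $\category{S}^{X_H}$, and then translate the answer back. Concretely, let $X_H$ be the (finite) Esakia dual of $H$, so that $H \cong H_{X_H}$. By Proposition \ref{StonePresheavesOnSierpinskiToEtaleVariety} the category $\etalevariety{H}$ is dually equivalent to $\category{S}^{X_H}$, and by Proposition \ref{equivalenceOfStricEsakiaAndStonePresheaves} the latter is equivalent to $\category{LH}_{\category{E}}/X_H$. A dual equivalence sends coproducts to products, so the coproduct of $c_1$ and $c_2$ in $\etalevariety{H}$ corresponds to the \emph{product} in $\category{S}^{X_H}$ of the two functors $F_1, F_2$ associated to $c_1, c_2$. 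Products in a functor category $\category{S}^{X_H}$ are computed pointwise, so $(F_1 \times F_2)(x) = F_1(x) \times F_2(x)$, the product being taken in $\category{S}$ (which exists, being a finite product of Stone spaces).

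Next I would transport this pointwise product through the Grothendieck construction of Proposition \ref{equivalenceOfStricEsakiaAndStonePresheaves}: the total space $\int(F_1\times F_2)$ has underlying set $\coprod_{x\in X_H}\bigl(F_1(x)\times F_2(x)\bigr)$, and one checks this is precisely the fibre product $\bigl(\int F_1\bigr)\times_{X_H}\bigl(\int F_2\bigr)$ — i.e.\ the pullback in $\category{LH}_{\category{E}}/X_H$ of the two strict $p$-morphisms $\pi_{F_1}$ and $\pi_{F_2}$ over $X_H$. Applying Esakia duality once more, the pullback of Esakia spaces over $X_H$ dualizes to a pushout of Heyting algebras over $H$, and this pushout is exactly the coproduct of $c_1, c_2$ computed in $\etalevariety{H}$ (consistency of the two descriptions).

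It remains to identify this pushout with the pushout taken in the category $\category{DL}$ of bounded distributive lattices. Here I would argue via Priestley duality, which is the restriction of Esakia duality to distributive lattices and which is also compatible with the relevant finite (co)limits: the Priestley dual of a pushout in $\category{DL}$ is the pullback of the underlying Priestley spaces. Since the underlying poset-with-topology of $\int(F_1\times F_2)$ is the same whether we regard the structures as Esakia spaces or as Priestley spaces — the fibre product over a finite discrete base $X_H$ is literally the same finite poset in both settings — the Heyting-algebra pushout and the distributive-lattice pushout have the same underlying set of (clopen) upsets, hence are isomorphic as bounded distributive lattices, and the isomorphism is compatible with the structure maps from $H$. (One uses here the standard fact, alluded to in the excerpt's discussion of Esakia duality, that a Priestley space whose dual lattice is a Heyting algebra is automatically an Esakia space, and that the Heyting structure on clopen upsets is then determined by the lattice structure; so no information is lost.)

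The step I expect to be the main obstacle is the last one: verifying carefully that the pullback in the category of Esakia spaces (with \emph{strict $p$-morphisms as objects} but \emph{standard Esakia morphisms as arrows}) really agrees, on underlying spaces, with the pullback of Priestley spaces, and that the clopen-upset algebra of the fibre product carries the pushout structure in \emph{both} $\category{H}$ and $\category{DL}$ simultaneously. The subtlety is that pushouts of Heyting algebras are in general strictly finer than pushouts of their distributive-lattice reducts; the point of the étale hypothesis is precisely that the fibres are discrete over $X_H$, so that the total space of the fibre product is the disjoint union of products of discrete fibres and no extra identifications or new points are forced when one passes between the two dualities. I would make this precise by checking directly that the finite poset $\coprod_{x\in X_H}\bigl(F_1(x)\times F_2(x)\bigr)$, with its product order fibrewise and $X_H$-order across fibres, satisfies the universal property of the pushout of $c_1$ and $c_2$ against test objects in $\category{DL}$, using that any lattice homomorphism out of the reducts that agrees on $H$ factors uniquely — this universal-property check is routine once the identification of the total space is in hand, but it is where the real content lies.
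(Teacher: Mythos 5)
Your proposal follows essentially the same route as the paper's proof: pass through the dual equivalence with $\category{S}^{X_H}$ and $\category{LH}_{\category{E}}/X_H$, compute the product of presheaves pointwise, identify the total space of $F_1\times F_2$ with the fibre product $X_1\times_X X_2$ of ordered spaces, and then apply Priestley duality to recognize this pullback as dual to the distributive-lattice pushout. The only difference is one of emphasis — you flag the compatibility of the Esakia-space and Priestley-space pullbacks as the delicate step, which the paper dispatches as a routine check — so the argument is correct and matches the paper.
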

\begin{proof}
Let us denote the coproduct of $c_1$ and $c_2$ in $\etalevariety{H}$ by $c_1\tilde\otimes c_2\colon H\to H_1\underset{H}{\tilde\otimes} H_2$.

Let $X$ be the Esakia dual of $H$. Let $f_i\colon X_i\to X$, $i=1,2$, represent the objects of $\category{LH}_{\category{E}}/X$ corresponding to $c_1$ and $c_2$, and let $f_1\underset{X}{\tilde\times}f_2$ denote their product in $\category{LH}_{\category{E}}/X$, dual to $c_1\tilde\otimes c_2$ above.

According to Proposition \ref{equivalenceOfStricEsakiaAndStonePresheaves}, let $F_1$ and $F_2$ be functors, i.e., objects of $\category{S}^X$, corresponding to $f_1$ and $f_2$.

Note that in functor categories, products are computed pointwise, namely $(F_1 \times F_2)(x) = F_1(x) \times F_2(x)$; see, e.g., \cite[Ch 1, \S2]{maclane-moerdijk-sheaves}.

Using this, it is straightforward to check that $F_1\times F_2$ corresponds to the pullback $f_1\underset{X}{\times}f_2$ as computed in the category of ordered spaces and arbitrary order-preserving continuous maps.

That is, the underlying set of $f_1\underset{X}{\times}f_2$ is the pullback $X_1\underset{X}\times X_2$ of underlying sets of $f_1$ and $f_2$ in the category of sets. Thus, $f_1\underset{X}{\tilde\times}f_2$ and $f_1\underset{X}{\times}f_2$ are isomorphic.

By Priestley duality, we know that the pullback $X_1\underset{X}{\times}X_2$ is dual to the pushout $H_1\underset{H}{\otimes}H_2$ of the corresponding distributive lattices. Thus, the algebra $H_1\underset{H}{\tilde\otimes} H_2$ in the $\etalevariety{H}$-coproduct $c_1\tilde\otimes c_2\colon H\to H_1\underset{H}{\tilde\otimes} H_2$ is isomorphic to the pushout $H_1\underset{H}{\otimes}H_2$ in distributive lattices.
\end{proof}

Moreover, the following holds: 

\begin{lemma}\label{finite-limits-lemma}
For a finite partially ordered set $X$, finite limits in $\category{LH}_{\category{E}}/X$ coincide with ones computed in ordered spaces and continuous order-preserving maps.
\end{lemma}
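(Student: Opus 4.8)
The plan is to reduce the statement to the two previously established cornerstones: the equivalence $\category{LH}_{\category{E}}/X\simeq\category{S}^{X}$ of Proposition \ref{equivalenceOfStricEsakiaAndStonePresheaves}, and the fact that finite limits in a functor category $\category{S}^{X}$ are computed pointwise, i.e. $(\varprojlim_i F_i)(x)=\varprojlim_i\bigl(F_i(x)\bigr)$ in $\category{S}$, provided $\category{S}$ has the relevant finite limits (it does: products of finitely many Stone spaces are Stone, and equalizers of continuous maps of Stone spaces are closed subspaces of Stone spaces, hence Stone). First I would spell this out: given a finite diagram $D\colon \category{J}\to\category{LH}_{\category{E}}/X$, transport it along $\Phi$ to a finite diagram in $\category{S}^{X}$, form its limit pointwise, and then apply the Grothendieck construction $\Psi$ to obtain an object $\pi\colon\int(\varprojlim\Phi D)\to X$ of $\category{LH}_{\category{E}}/X$; since $\Phi,\Psi$ are mutually inverse equivalences, this is the limit of $D$ in $\category{LH}_{\category{E}}/X$.

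The remaining task is to identify the total space $\int(\varprojlim_i \Phi f_i)$ with the limit of the underlying diagram of $X_i$'s computed in the category $\category{OP}$ of ordered topological spaces and continuous order-preserving maps. Here I would argue at the level of underlying sets first: the Grothendieck construction of a pointwise limit is $\coprod_{x\in X}\varprojlim_i F_i(x)$, and there is a canonical bijection between $\coprod_{x\in X}\varprojlim_i F_i(x)$ and the limit $\varprojlim_i X_i$ of the underlying sets, obtained by noting that a compatible family $(\xi_i)_i$ with $\xi_i\in X_i$ all lying over a common $x\in X$ is exactly a point of $\varprojlim_i F_i(x)$ (using $F_i(x)=\rightadjunct{f_i}(\{x\})$). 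One then checks this bijection is a homeomorphism and order-isomorphism: the topology on $\int(\varprojlim F_i)$ is the coproduct topology over the finite discrete $X$ of the subspace topologies on the fibres, which matches the subspace topology that $\varprojlim_i X_i$ inherits as a subspace of $\prod_i X_i$; and the order relation on $\int(\varprojlim F_i)$, defined componentwise over $x\leqslant x'$ in $X$ via the transition maps $F_{i,xx'}$, matches the componentwise order on $\varprojlim_i X_i\subseteq\prod_i X_i$ precisely because the transition map of $F_i$ over $x\leqslant x'$ is the restriction of the order relation of $X_i$. Combining these identifications, $\pi\colon\int(\varprojlim\Phi f_i)\to X$ is, as an object of $\category{OP}/X$, the limit of the $f_i$; and since all $X_i$ are Esakia spaces and the construction lands in $\category{S}^{X}$, the limit object is again an Esakia space with $\pi$ a continuous strict $p$-morphism, so no information is lost by passing to $\category{OP}$.

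Finally I would observe that the forgetful functor $\category{LH}_{\category{E}}/X\to\category{OP}/X$ (and thence the underlying-diagram passage to $\category{OP}$) sends the limit cone we constructed to a limit cone in $\category{OP}$: the universal property is inherited because any cone in $\category{OP}/X$ over the diagram, by the same fibrewise analysis, factors uniquely through $\coprod_x\varprojlim F_i(x)$, and this factorization is automatically continuous and order-preserving, hence automatically a morphism of $\category{LH}_{\category{E}}/X$ by the remark (recalled in the paragraph preceding Lemma \ref{finite-limits-lemma}) that a continuous order-preserving map over $X$ between strict $p$-morphisms is itself a strict $p$-morphism. The main obstacle I anticipate is not any single hard step but the bookkeeping in the previous paragraph: carefully verifying that the coproduct-of-fibres topology and the componentwise order on the Grothendieck construction coincide with the subspace structure on $\varprojlim_i X_i$, and that this holds uniformly for an arbitrary finite diagram shape $\category{J}$ (products and equalizers suffice, but one should phrase it for general finite limits). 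Everything else is a formal consequence of the equivalence of Proposition \ref{equivalenceOfStricEsakiaAndStonePresheaves} and pointwise computation of limits in functor categories.
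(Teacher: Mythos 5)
Your argument is correct, and it reaches the conclusion by a route that is organizationally different from the paper's, though both rest on the same foundation. The paper's proof does not handle arbitrary finite diagrams directly: it observes that the terminal object is obviously the same in both categories, reduces pullbacks to the product computation already carried out in the proof of Corollary \ref{pushout-corrolary} (which is itself the special case of your pointwise-limit argument for binary products), and then invokes the standard fact (citing Borceux) that all finite limits are constructible from the terminal object and pullbacks. You instead transport an arbitrary finite diagram across the equivalence $\Phi\colon\category{LH}_{\category{E}}/X\simeq\category{S}^{X}$ of Proposition \ref{equivalenceOfStricEsakiaAndStonePresheaves}, use that $\category{S}$ has finite limits so that limits in $\category{S}^{X}$ are pointwise, and then identify the Grothendieck construction of the pointwise limit, fibre by fibre over the finite discrete $X$, with the limit taken in ordered spaces over $X$; the final appeal to the fact that a continuous order-preserving map over $X$ between strict $p$-morphisms is automatically a strict $p$-morphism (so that $\category{LH}_{\category{E}}/X$ is full in the relevant slice) correctly transfers the universal property. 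What the paper's decomposition buys is economy: the only new verification is the terminal object, with everything else delegated to Corollary \ref{pushout-corrolary} and a citation. What your uniform treatment buys is that the fibrewise identification of underlying set, topology, and order is done once for all finite shapes rather than shape by shape, and it makes explicit why equalizers (closed subspaces of Stone spaces) cause no trouble — a case the paper's proof absorbs silently into the Borceux reduction. The bookkeeping you flag as the main burden is exactly the content of the proof of Corollary \ref{pushout-corrolary} in the product case, so nothing essential is missing.
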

\begin{proof}
It is clear that the terminal object $id\colon X\to X$ in $\category{LH}_{\category{E}}/X$ coincides with the terminal object in ordered spaces over $X$.

Now for a tripple $f_i\colon Y_i\to X$, where $i\in \{1,2,3\}$, let us consider a diagram $f_2\overset{\!\alpha}{\rightarrow} f_1 \overset{~\beta}{\leftarrow} f_3$ in $\category{LH}_{\category{E}}/X$.

Then the pullback $f_2\underset{f_1}{\times}f_3$ is equal to $f_1\circ \pi$, where $\pi$ is the product $\alpha\times \beta$ (which is given by the pullback $P=Y_2\underset{Y_1}{\times}Y_3$ over $Y_1$).

\begin{figure}[h]
    \centering
    \begin{tikzpicture}[ampersand replacement=\&,>=stealth,->]
\matrix[matrix of math nodes, column sep = {1.5cm,between origins}, row sep = 1cm]
{
|(P)|P\&|(E)|Y_2\\
|(X)|Y_3\&|(F)|Y_1\\
};
\begin{scope}[every node/.style={midway,auto,font=\scriptsize}]
\draw (P) --  (E);
\draw (P) --  (X);
\draw[->] (X) -- node[below]{$\beta$} (F);
\draw (E) -- node[right]{$\alpha$} (F);

\end{scope}
\end{tikzpicture}
    \caption{Pullback of $\alpha, \beta$}
    \label{fig:enter-label}
\end{figure}
    
It has already been shown in the proof of Corollary \ref{pushout-corrolary} that products in $\category{LH}_{\category{E}}/X$ are the same as products found in the category of ordered topological spaces and continuous order-preserving functions. As a result, pulbacks in $\category{LH}_{\category{E}}/X$ coincide with those computed in the category of ordered topological spaces and continuous order-preserving functions.

Since, according to \cite[Proposition 2.8.2]{borceux1}, finite limits are constructible by the terminal object and pullbacks, we get that all finite limits in $\category{LH}_{\category{E}}/X$ coincide with ones computed in the category of ordered topological spaces and continuous order-preserving functions.
\end{proof}

\begin{corollary}\label{finite-colimits-cor}
    For a finite Heyting algebra $H$, all finite colimits in $\etalevariety{H}$ coincide with ones computed in the category of distributive lattices.
\end{corollary}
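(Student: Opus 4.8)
The plan is to transport the statement of Lemma \ref{finite-limits-lemma} across the chain of (dual) equivalences assembled in Figure \ref{fig:outline}, and then to dualise. First I would recall that for a finite Heyting algebra $H$ with Esakia dual $X = X_H$, Corollary \ref{Fin_strictEsakiasAreEquivalentToHalgsWithEtaleAxiom} together with Proposition \ref{strictEsakiasAreEquivalentToHalgsWithEtaleAxiom} identifies $\category{LH}_{\category{E}}/X$ with the variety $\mathcal{V}_{\text{\textetaleaxiom{H}}}$ (via Esakia duality $f \mapsto \rightadjunct{f}$), and that Propositions \ref{equivalenceOfStricEsakiaAndStonePresheaves} and \ref{StonePresheavesOnSierpinskiToEtaleVariety} combine to give a chain identifying $\category{LH}_{\category{E}}/X$ with $\category{S}^{X}$ and then $\category{S}^{X}$ dually with $\etalevariety{H_X} = \etalevariety{H}$. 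Chasing these through, one gets a dual equivalence $\etalevariety{H} \simeq (\category{LH}_{\category{E}}/X)^{op}$ under which an \'etale $H$-algebra $c\colon H\to A$ corresponds to its Esakia-dual strict $p$-morphism $c_{\category{E}}\colon X_A \to X$. Under a dual equivalence, colimits in $\etalevariety{H}$ are computed as limits in $\category{LH}_{\category{E}}/X$; in particular finite colimits in $\etalevariety{H}$ correspond to finite limits in $\category{LH}_{\category{E}}/X$.

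Next I would invoke Lemma \ref{finite-limits-lemma}: since $X$ is finite, finite limits in $\category{LH}_{\category{E}}/X$ coincide with those computed in the category of ordered topological spaces and continuous order-preserving maps — equivalently, restricting to the relevant Priestley/Esakia objects, in the category of Priestley spaces and continuous order-preserving maps. It therefore suffices to observe what these limits dualise to on the algebraic side. By Priestley duality, the category of Priestley spaces and continuous order-preserving maps is dually equivalent to the category of bounded distributive lattices and lattice homomorphisms, and under this duality finite limits of Priestley spaces correspond to finite colimits of distributive lattices (this is the general colimit-to-limit behaviour of a dual equivalence, already used for the pushout in Corollary \ref{pushout-corrolary}, where the pullback $X_1 \times_X X_2$ was seen to be dual to the pushout $H_1 \otimes_H H_2$ of distributive lattices). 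Composing, a finite colimit in $\etalevariety{H}$ is carried to a finite limit in $\category{LH}_{\category{E}}/X$, which by Lemma \ref{finite-limits-lemma} is a finite limit of ordered spaces, which by Priestley duality is precisely the corresponding finite colimit of distributive lattices; since all the intermediate functors send $H$ to $H$ compatibly, the underlying distributive lattice of the colimit in $\etalevariety{H}$ is the colimit computed in distributive lattices.

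The main obstacle I anticipate is bookkeeping rather than mathematics: one must check that the equivalences of Figure \ref{fig:outline} do restrict appropriately and are compatible with the structure maps out of $H$, so that ``the colimit in $\etalevariety{H}$'' really is transported to ``the colimit in distributive lattices'' and not merely to something abstractly isomorphic in an unrelated category. Concretely, Corollary \ref{pushout-corrolary} already handles the coproduct case and its proof exhibits exactly the compatibility needed; the general finite colimit case reduces, as in Lemma \ref{finite-limits-lemma} (via \cite[Proposition 2.8.2]{borceux1}), to the initial object together with pushouts, and the initial object of $\etalevariety{H}$ is $1_H\colon H\to H$, whose underlying lattice $H$ is also initial among distributive lattices under $H$. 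Hence once the coproduct/pushout case is in hand — which it is — the result follows by the same dualisation argument applied to finite diagrams, and there is nothing further to compute.
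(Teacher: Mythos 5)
Your proposal is correct and follows exactly the route the paper intends: its own proof is the one-line instruction ``use Lemma \ref{finite-limits-lemma} and the duality between $\etalevariety{H}$ and $\category{LH}_{\category{E}}/X$,'' and your argument is precisely the expansion of that instruction, transporting finite colimits across the dual equivalence, invoking Lemma \ref{finite-limits-lemma}, and dualising back via Priestley duality. The compatibility bookkeeping you flag (initial object, pushouts via Corollary \ref{pushout-corrolary}) is exactly the content the paper leaves implicit.
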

\begin{proof}
        Use Lemma \ref{finite-limits-lemma} and the duality between $\etalevariety{H}$ and $\category{LH}_{\category{E}}/X$. 
\end{proof}

\section{Conclusion and future work}
\subsection{General case}
A natural follow-up question is whether the categories $\etalevariety{H}$ and $\category{LH}_{\category{E}}/X$ are still dually equivalent in the case of arbitrary, infinite Heyting algebra $H$. Unfortunately, the answer to this question appears to be negative.

Nonetheless, first note the following:

\begin{proposition}
    For an arbitrary  continuous strict $p$-morphism $f\colon X'\to X$ the Esakia dual of $f$ belongs to $\etalevariety{H_X}$.
\end{proposition}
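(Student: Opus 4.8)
The plan is to realize the Esakia dual $\rightadjunct{f}\colon H_X\to H_{X'}$ as a sub-$H_X$-algebra of a homomorphic image of a power of the generator, which is enough: by Tarski's and Birkhoff's theorems $\etalevariety{H_X}=\operatorname{HSP}(H_X)$ computed inside $\alg{H_X}$, with the identity $1_{H_X}$ as the initial, generating object, and a variety is closed under $\operatorname{H}$, $\operatorname{S}$, $\operatorname{P}$. The single index set used throughout the argument will be the underlying set of $X'$.

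Fix $x'\in X'$. The principal upset $\upsetarrow x'$, with the subspace topology and the induced order, is closed in $X'$ by Lemma \ref{esakia_space_pointed_upset_is_closed}, and being a closed upset it is again an Esakia space: it is a Priestley space as a closed subspace, and for clopen $C\subseteq\upsetarrow x'$ one separates $C$ from $(\upsetarrow x')\setminus C$ by a clopen $D$ of $X'$ with $D\cap\upsetarrow x'=C$, and then $\downsetarrow_{\upsetarrow x'}C=(\downsetarrow_{X'}D)\cap\upsetarrow x'$ is clopen in $\upsetarrow x'$. Hence $H_{\upsetarrow x'}$ is a genuine Heyting algebra. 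The inclusion $\iota_{x'}\colon\upsetarrow x'\hookrightarrow X'$ is an injective continuous (strict) $p$-morphism, so its Esakia dual is the restriction map $r_{x'}\colon H_{X'}\to H_{\upsetarrow x'}$, $U\mapsto U\cap\upsetarrow x'$, which is \emph{surjective}, because the dual of an injective Esakia morphism is onto (every clopen upset of a closed subspace is the trace of a clopen upset of the whole space, by a routine compactness argument). Using strictness of $f$ next, the restriction $g_{x'}:=\restr{f}{\upsetarrow x'}=f\circ\iota_{x'}\colon\upsetarrow x'\to X$ is an injective continuous $p$-morphism — injectivity is exactly strictness of $f$ along $\upsetarrow x'$, as in the lemmas preceding the statement — so its dual $q_{x'}\colon H_X\to H_{\upsetarrow x'}$ is a surjective Heyting homomorphism, and from $g_{x'}=f\circ\iota_{x'}$ we get the commuting triangle $r_{x'}\circ\rightadjunct{f}=q_{x'}$.

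Now assemble. The family $r:=(r_{x'})_{x'\in X'}\colon H_{X'}\to\prod_{x'\in X'}H_{\upsetarrow x'}$ is an injective Heyting homomorphism: if $U\neq V$ in $H_{X'}$ and $z$ lies in their symmetric difference, then $z\in\upsetarrow z$ separates $r_z(U)$ from $r_z(V)$. Let $\Delta\colon H_X\to H_X^{X'}$ be the diagonal and $\prod_{x'}q_{x'}\colon H_X^{X'}\to\prod_{x'}H_{\upsetarrow x'}$ the induced product homomorphism, which is surjective since each $q_{x'}$ is; then $\bigl(\prod_{x'}q_{x'}\bigr)\circ\Delta=r\circ\rightadjunct{f}$. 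Viewing $(H_X^{X'},\Delta)$ as the $X'$-indexed power of $1_{H_X}$ in $\alg{H_X}$, it lies in $\etalevariety{H_X}$; its homomorphic image $\bigl(\prod_{x'}H_{\upsetarrow x'},\ (\prod_{x'}q_{x'})\circ\Delta\bigr)$ therefore also lies in $\etalevariety{H_X}$; and $r$ realizes $(H_{X'},\rightadjunct{f})$ as a sub-$H_X$-algebra of this object, since $r$ is an injective Heyting homomorphism, it intertwines the two structure maps ($r\circ\rightadjunct{f}=(\prod_{x'}q_{x'})\circ\Delta$), and its image therefore contains the image of the target's structure map. Closure of the variety $\etalevariety{H_X}$ under subalgebras yields $\rightadjunct{f}\in\etalevariety{H_X}$.

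Almost every step is routine once the preceding lemmas are in hand — that each $\iota_{x'}$ and $g_{x'}$ is a continuous $p$-morphism, the commuting triangle, pointwise products of algebras — and parallels the finite-case argument in Proposition \ref{StonePresheavesOnSierpinskiToEtaleVariety}. The delicate point, and the main obstacle, is the surjectivity of each $q_{x'}\colon H_X\twoheadrightarrow H_{\upsetarrow x'}$, together with the accompanying fact that $\upsetarrow x'$ is an Esakia space (so that $H_{\upsetarrow x'}$ is a Heyting algebra and $q_{x'},r_{x'}$ are Heyting, not merely lattice, homomorphisms). These are classical facts about closed (generated) subframes of Esakia spaces, which I would either cite from \cite{esakiaBook} or prove by the compactness argument indicated above.
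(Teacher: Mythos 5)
Your argument is correct and is essentially the paper's own proof: both realize $(H_{X'},\rightadjunct{f})$ as a subalgebra of a product, indexed by the points of $X'$, of surjective images of $H_X$ obtained from the principal-upset inclusions, with strictness of $f$ supplying the isomorphism $\upsetarrow x'\cong\ \upsetarrow f(x')$ that makes each factor a quotient of $H_X$. The only cosmetic difference is that the paper takes the factors to be $\category{Up}$-algebras of upsets $\upsetarrow x$ in the base $X$ and packages your $r_{x'}$ and the iso into a single map $m_{x,\xi}$ (verified to be a Heyting homomorphism by dualizing $\upsetarrow x\xrightarrow{f^{-1}}\upsetarrow\xi\hookrightarrow X'$), while you work with upsets in $X'$; the classical fact you flag as delicate (surjectivity of the dual of an injective Esakia morphism) is used equally implicitly by the paper.
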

\begin{proof}
Similar to the approach used in Proposition \ref{StonePresheavesOnSierpinskiToEtaleVariety}, we can demonstrate that $H_X'$ is isomorphic to a subalgebra of $\prod_{ \begin{subarray}{l}x\in X \\ \xi\in \rightadjunct{f}(\{x\})\end{subarray}}{H_{\upsetarrow x}}$.

Let $X$ be an arbitrary Esakia space and $f\colon X'\to X$ a strict Esakia morphism. The elements of $X'$ can be indexed by pairs $(x,\xi)$ where $x\in X$ and $\xi\in \rightadjunct{f}(\{x\})$. Since $f$ is a strict $p$-morphism, then $\restr{f}{\upsetarrow \xi}$ is a continuous bijection between $\upsetarrow \xi$ and $\upsetarrow f(\xi)$.

Recall that $\upsetarrow \xi$ is the smallest upset containing $\xi$, which, by Lemma \ref{esakia_space_pointed_upset_is_closed}, is always closed in an Esakia space. A closed subspace of a Hausdorff space is compact Hausdorff as well. Hence, $\restr{f}{\upsetarrow \xi}$ is an isomorphism because it is a continuous bijection of compact Hausdorff spaces.

The map $\left(m_{x,\xi}\right)_{\category{E}}\!\colon \uparrow x\xrightarrow{f^{-1}} \uparrow \xi\xhookrightarrow{\uparrow \xi\cap \rightadjunct{f}({x})} X'$ is clearly a continuous map.

Let's suppose that $y\leqslant z$ in $\uparrow x$. Then, the map $\left(m_{x,\xi}\right)_{\category{E}}$ is an order-preserving map due to the fact that $f$ is a $p$-morphism. Given that $f$ is order-preserving, it follows that $\left(m_{x,\xi}\right)_{\category{E}}$ is a $p$-morphism.

Once again, to complete the proof that the Esakia dual of $f$ is in the variety $\etalevariety{H_X}$, we need to verify that for arbitrary elements $x\in X$, the algebra $H_{\upsetarrow x}$ belongs to $\etalevariety{H_X}$. This is trivially the case since, for any $x\in X$, the inclusion $\upsetarrow x\hookrightarrow X$ is a continuous $p$-morphism, and hence $H_{\upsetarrow x}$ is a homomorphic image of $H_X$.  
\end{proof}

The inverse direction makes things harder. Namely, for infinite Esakia spaces $X$, there exist non-strict p-morphisms $f\colon X'\to X$ whose Esakia duals do belong to $\etalevariety{H_X}$.

\begin{example}
Consider the ordinals $X=\omega+1$ and $X'=\omega+2$, along with their natural orderings and ordinal topologies. It is straightforward to verify (and well known) that both are Esakia spaces. Consider the map $\pi\colon X'\to X$, sending all elements of $\omega+1$ to themselves and the element $\omega+1\in X'$ to $\omega\in X$. It is evident that $\pi$ is a continuous order-preserving $p$-morphism that is not strict. Indeed, under a strict $p$-morphism, inverse images of points are incomparable, while $\pi(\omega)=\pi(\omega+1)=\omega$ with $\omega<\omega+1$.

On the other hand, the dual homomorphism $\esakiadualhomo{\pi}\colon H_X\to H_{X'}$ belongs to $\etalevariety{H_X}$.

To see this, we will explicitly represent $H_{X'}$ as a subquotient of a product of several copies of $H_X$, in a way compatible with $\esakiadualhomo{\pi}$ and the diagonal embedding of $H_X$ into that product.

For that, it will be convenient for us to identify $H_X$ with the set of all natural numbers with reverse ordering, together with an extra bottom element,
\[
H=\{1\succ 2 \succ 3 \succ \cdots \succ \bot\}.
\]
Here $n\in H$, $n=1,2,3,\ldots$ corresponds to the principal clopen upset $\upsetarrow n$ of $X$, and $\bot$ to the empty clopen upset.

Similarly, $H_{X'}$ can be identified with $H'=H\cup\{\bot'\}$, where $\bot'$ is an element with $h\succ\bot'$ for all $h\in H$ distinct from $\bot$ and $\bot'\succ\bot$. This $\bot'$ corresponds to the clopen upset $\{\omega+1\}$ of $X'$. 

Under this identification, $\esakiadualhomo{\pi}$ corresponds to the identical embedding of $H$ into $H'$. So let us show that the $H$-algebra $H'$ defined by $\esakiadualhomo{\pi}$ indeed belongs to $\etalevariety{H}$. Namely, it is a quotient of a subalgebra of the product of an infinite number of copies of $H$.

Let $N\subset H$ be $H\setminus\{\bot\}$. Let $H^N$ be the product of $N$ many copies of $H$, which we will view as the algebra of all maps from $N$ to $H$ with pointwise Heyting algebra structure. Let $A\subset H^N$ be the Heyting subalgebra of $H^N$ generated by the identical embedding $\boldsymbol x\colon N\hookrightarrow H$, together with all constant maps $\boldsymbol c_h$, $h\in H$, with $\boldsymbol c_h(n)= h$ for all $n\in N$. Note also that $\neg\boldsymbol x=\neg\boldsymbol c_h=\boldsymbol c_\bot$.

Since the image of the diagonal embedding $d\colon H\hookrightarrow H^N$ consists precisely of the constant maps $\boldsymbol c_h$ above, $A$ contains the image of $d$, so $a\colon H\hookrightarrow A$ belongs to $\etalevariety{H}$.

Let $\mathcal F\subseteq A$ be the subset of $A$ consisting of those $f\in A$ which satisfy $f(n)=1$ for almost all $n$, i.e., there exists an $n_0\in N$ such that $f(n)=1$ for all $n\prec n_0$. This $\mathcal F$ is a filter of $A$.

Note that,
\[
(\boldsymbol x\mathimplies \boldsymbol c_h)(n)=
\begin{cases}
h, \text{ if } n\succ h\\
1, \text{ otherwise}
\end{cases}
\]
which belongs to $\mathcal F$ for all $h\neq \bot$ and is equal to $\boldsymbol c_\bot$ for $h=\bot$.

Moreover
\[
\boldsymbol x \cong_{\mathcal F} (\boldsymbol c_h\mathimplies \boldsymbol x)(n)=
\begin{cases}
1, \text{ if } n\succ h \text{ or } n=h\\
n, \text{ otherwise}
\end{cases}; 
\]

\[
 \boldsymbol c_h \cong_{\mathcal F} (\boldsymbol c_h\vee \boldsymbol x)(n)=
\begin{cases}
n, \text{ if } n\succ h \text{ or } n=h\\
h, \text{ otherwise}
\end{cases}; 
\]

\[
\boldsymbol x \cong_{\mathcal F} (\boldsymbol c_h\wedge \boldsymbol x)(n)=
\begin{cases}
h, \text{ if } n\succ h \text{ or } n=h\\
n, \text{ otherwise}
\end{cases}.
\]

It follows that in the quotient algebra $A/\mathcal F$ we have $[\boldsymbol x]\prec [\boldsymbol c_n]$ for all $h\neq \bot$ and $[\boldsymbol x]\succ [\boldsymbol c_\bot]=[\bot]$. Also, $[\boldsymbol c_h]\succ [\boldsymbol c_{h+1}]$ for all $h\neq \bot$ as $\boldsymbol c_h\mathimplies \boldsymbol c_{h+1}\notin\mathcal F$, and $[\boldsymbol x]\ne[\bot]$ since $\boldsymbol x\mathimplies\bot\notin\mathcal F$. Also note that from the computations shown above, it follows that $A/\mathcal F$ does not contain any other elements because $A$ is generated by the $\boldsymbol c_h$ and by $\boldsymbol x$.

We therefore conclude that $A/\mathcal F$ is isomorphic to $H'$. Moreover, this isomorphism is compatible with the homomorphisms $d\colon H\hookrightarrow A$ and $\esakiadualhomo{\pi}\colon H\to H'$, so that indeed $H'$ belongs to $\etalevariety{H}$.
\end{example}

\section*{Acknowledgments}
I am grateful to Mamuka Jibladze and David Gabelaia, who have generously contributed to this research effort with their interest and thoughtful suggestions.

\section*{Declarations}
\textbf{Ethical Approval: } Not applicable.\\
\textbf{Funding :} This work was supported by the Shota Rustaveli National Science Foundation of Georgia (SRNSFG) grant \#PHDF-23-109.\\
\textbf{Financial or Non-financial interests :} The author has no competing interests to declare that are relevant to the content of this article.


\begin{bibdiv}
\begin{biblist}

\bib{pits92}{article}{
  AUTHOR		= {Pitts, A. M.},
  TITLE			= {On an Interpretation of Second Order Quantification in First Order Intuitionistic Propositional Logic},
  JOURNAL		= {The Journal of Symbolic Logic},
  VOLUME		= {57},
  NUMBER		= {2},
  PAGES			= {33-52},
  YEAR			= {1992}
}

\bib{heyting123}{article}{
  author		= {Heyting, A.},
  title		= {Die formalen Regeln der intuitionistischen Logik. I, II, III},
  journal		= {Sitzungsberichte der Preußischen Akademie der Wissenschaften, Physikalisch-Mathematische Klasse},
  volume		= {},
  number		= {},
  pages		= {42-169},
  year		= {1930}
}

\bib{borceux1}{book}{ 
author={Borceux, F}, 
address={Cambridge}, 
series={Encyclopedia of Mathematics and its Applications}, 
title={Handbook of Categorical Algebra, Basic Category Theory}, 
volume={1}, 
publisher={Cambridge University Press}, 
year={1994}, 
journal={Encyclopedia of Mathematics and its Applications}
}

\bib{borceux2}{book}{ 
author={Borceux, F}, 
address={Cambridge}, 
series={Encyclopedia of Mathematics and its Applications}, 
title={Handbook of Categorical Algebra, Categories and Structures}, 
volume={2}, 
publisher={Cambridge University Press}, 
year={1994}, 
journal={Encyclopedia of Mathematics and its Applications}
}

\bib{borceux3}{book}{ 
author={Borceux, F}, 
address={Cambridge}, 
series={Encyclopedia of Mathematics and its Applications}, 
title={Handbook of Categorical Algebra, Categories of Sheaves}, 
volume={3}, 
publisher={Cambridge University Press}, 
year={1994}, 
journal={Encyclopedia of Mathematics and its Applications}
}

\bib{joyal-moerdijk}{article}{
author = {Joyal, A.  and Moerdijk, I.},
title = {A completeness theorem for open maps},
journal = {Annals of Pure and Applied Logic},
volume = {70},
number = {1},
pages = {51-86},
year = {1994}
}

\bib{stone_1_36}{article}{
title = {The Theory of Representation for Boolean Algebras},
journal = {Transactions of the American Mathematical Society},
volume = {40},
number = {1},
pages = {37-111},
year = {1936},
author = {Stone, M. H.}
}

\bib{stone_2_38}{article}{
title = {Topological representations of distributive lattices and Brouwerian logics},
journal = {\v{C}asopis pro p\v{e}stování matematiky a fysiky},
volume = {67},
number = {1},
pages = {1-25},
year = {1938},
author = {Stone, Marshall H.}
}

\bib{elephant}{book}{
  author = 	 {Johnstone, P.},
  title = 	 {Sketches of an Elephant: A Topos Theory Compendium (Vol. 1\&2)},
  address		= {Oxford},
  publisher = 	 {Oxford University Press},
  year = 	 {2002}
}

\bib{stone-spaces-johnstone}{book}{
    author = {Johnstone, P.},
    title = {Stone Spaces},
    publisher = {Cambridge University Press},
    address = {Cambridge, New York, Oakleigh},
    year = {1986}
}

\bib{burrisAndSankappanavar}{book}{
author = {Burris, S.  and Sankappanavar, H. P.},
title = {A Course in Universal Algebra},
address		= {Berlin},
publisher = {Springer-Verlag Berlin}, 
year =  {1999}
}

\bib{birkhoff1935}{article}{
    author  = {Birkhoff, G.},
    title   = {On the structure of abstract algebras},
    year    = {1935},
    journal = {Proceedings of the Cambridge Philosophical Society},
    volume  = {31},
    number  = {4},
    pages   = {433-454}
}

\bib{wraith1974}{article}{
    author  = {Wraith, G.},
    title   = {Artin Glueing},
    year    = {1974},
    journal = {Journal of Pure and Applied Algebra},
    volume  = {4},
    number = {1},
    pages   = {345-358}
}

\bib{esakiaBook}{book}{
  title     = {Heyting Algebras: Duality Theory},
  author    = {Esakia, L.},
  year      = {2019},
  address = {Cham},
  publisher = {Springer Nature},
  pages = {95}
}

\bib{esakia1974}{article}{
    title = {Topological Kripke models (Russian)},
    author  = {Esakia, L.},
    year    = {1974},
    journal = {Dokl. Akad. Nauk SSSR},
    volume  = {214},
    number = {2},
    pages   = {298–301},
    note = {(An English translation appears in Soviet Math Dokladi, 15 (1974), 147-151)}

}

\bib{adams1984}{article}{
  title={Homomorphisms and endomorphisms in varieties of pseudocomplemented distributive lattices (with applications to Heyting algebras)},
  author={Adams, M. E. and Koubek, V. and Sichler, J.},
  journal={Transactions of the American Mathematical Society},
  year={1984},
  volume={285},
  pages={57-79}
}

\bib{davey_priestley_2002}{book}{ 
address={Cambridge}, 
edition={2}, 
title={Introduction to Lattices and Order}, 
publisher={Cambridge University Press}, 
author={Davey, B. A. and Priestley, H. A.}, 
year={2002}
}

\bib{jib}{article}{
    author  = {Jibladze, M.},
    title   = {One more notion of relative Booleanness},
    year    = {2002},
    journal = {Proceedings of the Institute of Cybernetics of the Georgian Academy of Sciences},
    volume  = {2},
    number  = {1-2},
    pages   = {52-66}
}

\bib{johnstone_tolo3}{unpublished}{
title= {Hochas and minimal toposes},
author = {Johnstone, P.},
year = {2012},
note= {International Workshop on Topological Methods in Logic III, Tbilisi, July 23—27}
}

\bib{n_bezhanishvli_dissertation}{inproceedings}{
  title={Lattices of intermediate and cylindric modal logics},
  author={Bezhanishvili, N.},
  year={2006},
  note = {Thesis, fully internal, Universiteit van Amsterdam},
  address = {Amsterdam},
  publisher = {Institute for Logic, Language and Computation},
  series = {ILLC Dissertation (DS) Series},
  booktitle = {PhD Thesis}
}

\bib{tarski1946}{article}{
 author = {Tarski, A.},
 journal = {Annals of Mathematics},
 number = {1},
 pages = {163--166},
 publisher = {Annals of Mathematics},
 title = {A Remark on Functionally Free Algebras},
 volume = {47},
 year = {1946}
}

\bib{ johnstone-topos-theory77}{book}{
author = {Johnstone, P.},
title = {Topos theory},
publisher = {Academic Press London},
address = {New York},
pages = {xxiii, 367 p.},
year = {1977}
}

\bib{adequate-isbell}{article}{
author = {Isbell, J. R.},
title = {Adequate subcategories},
volume = {4},
journal = {Illinois Journal of Mathematics},
number = {4},
publisher = {Duke University Press},
pages = {541 -- 552},
year = {1960}
}

\bib{harsthorne}{book}{
    author = {Hartshorne, R.},
    title = {Algebraic Geometry},
    publisher = {Springer},
    address ={New Yourk},
    year = {1977}
}

\bib{avery-leinster-on-isbell}{article}{
    author = {Avery, T. and Leinster, T.},
    title = {Isbell conjugacy and the reflexive completion},
    journal = {Theory and Applications of Categories},
    volume = {36},
    number ={12},
    pages = {306 -- 347},
    year = {2021}
}

\bib{leinster_book}{book}{ 
address={Cambridge}, 
series={Cambridge Studies in Advanced Mathematics}, 
title={Basic Category Theory}, 
publisher={Cambridge University Press}, 
author={Leinster, T.}, 
year={2014}, 
journal={Cambridge Studies in Advanced Mathematics}
}

\bib{maclane-category-book}{book}{
    author = {Mac Lane, S.},
    title = {Categories for the Working Mathematician},
    publisher = {Springer},
    address = {New York},
    year = {1998}
}

\bib{riehl-category-book}{book}{
    author = {Riehl, E.},
    title = {Category Theory in Context},
    address ={New York},
    publisher = {Dover Publications},
    year = {2016}
}

\bib{halperin-gabbay-history}{incollection}{
title = {Algebraical Logic 1685-1900},
author = {Hailperin, T.},
editor = {Gabbay, Dov M.  and Woods, J.},
series = {Handbook of the History of Logic},
publisher = {North-Holland},
volume = {3},
year = {2004},
booktitle = {The Rise of Modern Logic: From Leibniz to Frege},
}

\bib{valencia-gabbay-history}{incollection}{
title = {The Algebra of Logic},
author = {Valencia, V. S.},
editor = {Gabbay, Dov M.  and Woods, J.},
series = {Handbook of the History of Logic},
publisher = {North-Holland},
volume = {3},
year = {2004},
booktitle = {The Rise of Modern Logic: From Leibniz to Frege},
}

\bib{priestley-1972}{article}{
author = {Priestley, H. A.},
title = {Ordered Topological Spaces and the Representation of Distributive Lattices},
journal = {Proceedings of the London Mathematical Society},
volume = {s3-24},
number = {3},
pages = {507-530},
year = {1972}
}

\bib{priestley-1970}{article}{
author = {Priestley, H. A.},
title = {Representation of Distributive Lattices by means of ordered Stone Spaces},
journal = {Bulletin of the London Mathematical Society},
volume = {2},
number = {2},
pages = {186-190},
year = {1970}
}

\bib{taylor2017}{article}{
 author = {Taylor, C. J. },
 journal = {Studia Logica: An International Journal for Symbolic Logic},
 number = {4},
 pages = {817--841},
 publisher = {Springer},
 title = {Expansions of Dually Pseudocomplemented Heyting Algebras},
 volume = {105},
 year = {2017}
}

\bib{nouri2020}{article}{
 author = {Nouri, M. },
 journal = {Journal of Siberian Federal University. Mathematics \& Physics.},
 number = {13},
 pages = {414--421},
 title = {Algebraic Geometry over Heyting Algebras},
 volume = {4},
 year = {2020}
}

\bib{shevlyakov2015}{article}{ 
author = {Shevlyakov, A. N.},
year = {2015},
title = {Algebraic Geometry Over Boolean Algebras in the Language with Constants},
journal = {Journal of Mathematical Sciences},
pages = {742--757},
number = {206},
volume = {6}
}

\bib{daniyarova2012}{article}{
author={Daniyarova, E. Yu. and Myasnikov, A. G. and Remeslennikov, V. N.},
year = {2012},
title = {Algebraic geometry over algebraic structures. II. Foundations},
journal = {Journal of Mathematical Sciences},
pages = {389--416},
number = {185},
volume = {3} 
}

\bib{troelstra1966}{article}{
author={Troelstra, A. S. and de Jongh, D. H. J.},
year = {1966},
title = {On the connection of partially ordered sets with some pseudo-boolean algebras},
journal = {Indagationes Mathematicae},
pages = {317--329},
volume = {28}
}

\bib{maclane-moerdijk-sheaves}{book}{
    author = {Mac Lane, S. and Moerdijk, I.},
    title = {Sheaves in Geometry and Logic, A First Introduction to Topos Theory},
    address ={ New York, NY},
    publisher = {Springer},
    year = {1992}
}

\bib{styazhkin}{book}{
    author = {Styazhkin, N. I.},
    title = {History of Mathematical Logic from Leibniz to Peano},
    address ={Cambridge},
    publisher = {The MIT Press},
    year = {1975}
}

\bib{Alexandroff-37}{article}{
    author = {Alexandroff, P.},
    title = {Diskrete R\"aume},
    journal = {Rec. Math. [Mat. Sbornik] N.S.},
    year = {1937},
    number = {44},
    volume  = {3},
    pages = {501--519}
}

\bib{almeida-2023-pi2rule}{misc}{
      title={$\Pi_{2}$-Rule Systems and Inductive Classes of G\"{o}del Algebras}, 
      author={Almeida, R. N. },
      year={2023},
      eprint={2311.07189}
}

\bib{engelking}{book}{ 
address={Berlin}, 
edition={2}, 
title={General Topology}, 
publisher={Hildermann Verlag}, 
author={Engelking, R.}, 
year={1989}
}

\bib{GelNeu43}{article}{
    author = {Gelfand, I. and Neumark, M.},
    title = {On the imbedding of normed rings into the ring of operators in Hilbert space},
    journal = {Recueil Mathématique (Nouvelle série)},
    year = {1943},
    number = {54},
    volume  = {12},
    pages = {197--217}

}

\bib{HarsthorneResidues}{book}{ 
address={Heidelberg}, 
edition={1}, 
title={Residues and Duality},
subtitle={Lecture Notes of a Seminar on the Work of A. Grothendieck, Given at Harvard 1963 /64},
publisher={Springer Berlin}, 
author={Hartshorne, R.}, 
year={1966}
}

\bib{willard}{book}{
    author = {Willard, S.},
    title = {General topology},
    address ={Reading, Mass.},
    publisher = {Addison-Wesley Pub. Co.},
    year = {1970},
    pages = {xii, 369}
}

\end{biblist}
\end{bibdiv}
\end{document}